\newcommand{\Rmnum}[1]{\expandafter\@slowromancap\romannumeral #1@}
\theoremstyle{plain}
\newtheorem{theorem}{Theorem}[section]
\newtheorem{proposition}[theorem]{Proposition}
\newtheorem{corollary}[theorem]{Corollary}
\theoremstyle{definition}
\newtheorem{definition}[theorem]{Definition}
\allowdisplaybreaks \setlength{\textwidth}{460pt}
\title[AM-un]{AM-unbounded norm compact operators on Banach lattices}
\date{\today}
\keywords{Riesz space, Banach lattice, AM-compact, order weakly compact, AM-un-compact, order uaw-compact.}
\subjclass[2010]{46A40, 46B42}
\author[Z. Wang]{Zhangjun Wang$^{1}$}
\address{$^1$ The first author:School of Mathematics, Southwest Jiaotong University,
	Chengdu, Sichuan,
	China, 610000.}
\email{zhangjunwang@my.swjtu.edu.cn}
\author[Z. Chen]{Zili Chen$^{2}$}
\address{$^2$ The second author:School of Mathematics, Southwest Jiaotong University, Chengdu, Sichuan,
	China, 610000.}
\email{zlchen@swjtu.edu.cn}
\author[J. Chen]{Jinxi Chen$^{3}$}
\address{$^3$ The third author: School of Mathematics, Southwest Jiaotong University, Chengdu, Sichuan,
	China, 610000.}
\email{jinxichen@swjtu.edu.cn}
\begin{document}

\begin{abstract}
In this paper, we introduce and study a new classes of operators, named \emph{AM-unbounded norm compact} operators. We study the the basic properties of the new operator and we investigate the lattice-order and topology property of the operator space of $AM$-unbounded norm compact operator.
\end{abstract}
	
\maketitle

\section{Introduction and Preliminaries}
	
Research of compact operators and weakly compact operators on Banach lattices has a long history in functional analysis literature. The notion of \emph{unbounded order convergence} was firstly introduced by Nakano in \cite{N:48}. Several recent papers investigated unbounded order , norm , and absolute weak convergence \cite{GX:14,G:14,KMT:16,Z:16}. The continuity and compactness of operators with respect to the unbounded convergence has been investigated in \cite{KMT:16,BA:18,OGZ:18}. Now, we consider the $AM$-compactness defined by unbounded convergence of operators.  

In this paper, using the unbounded norm convergence in Banach lattices, we introduce and study a new classes of operators, named \emph{AM-unbounded norm compact} operators. In section 2, we study the the relationship with orther operators, also we research the adjoint operator of the new operator. In section 3, we investigate the lattice and topology property of the operator space of $AM$-unbounded norm compact operator. When it is a band. We introduce a new norm named $AM$-$un$-norm, and found these operators form a Banach lattice under the new norm. Moreover, we gave the equivalent condition to describe when the operator space is $AM$ or $AL$-space.

Let us review the basic knowledge. 
A net $(x_\alpha)$ in $E$ is said to be \emph{unbounded order convergent} ($uo$-convergent, for short) to $x$ if for every $u\in E_+$ the net $(|x_\alpha-x|\wedge u)$ converges to zero in order. It is called \emph{unbounded norm convergent} ($un$-convergent, for short) to $x$ if $\Vert|x_\alpha-x|\wedge u\Vert\rightarrow0$ for every $u\in E_+$, and called \emph{unbounded absolute weak convergent} ($uaw$-convergent, for short) to $x$ if $\Vert|x_\alpha-x|\wedge u\Vert\xrightarrow{w}0$ for every $u\in E_+$.
	
$un$-convergence and $uaw$-convergence are topological. For every $\epsilon>0$ and non-zero $u\in E_+$, put 
$$V_{\epsilon,u}=\{x\in E:\Vert|x|\wedge u\Vert<\epsilon\}.$$
The collection of all sets of this form is a base of zero neighborhoods for a topology, and the convergence in this topology agrees with $un$-convergence, we will refer to this topology as $un$-topology.
We can also form $uaw$-topology by 
$$V_{\epsilon,u,f}=\{x\in E:f(|x|\wedge u)<\epsilon\}$$
where $u\in E_+$, $\epsilon>0$, $f\in E^{\prime}_+$, which form a base of zero neighborhoods for a Hausdorff topology, and the convergence in this topology is exactly the $uaw$-topology.
	
A bounded subset $A$ in Banach lattice $E$ is said $un(uaw)$-compact whenever every net $(x_\alpha)$ in $E$ has a subnet, which is $un(uaw)$-convergent, a bounded subset $A$ of Banach lattice $E$ is called to be sequentially $un(uaw)$-compact whenever every net $(x_n)$ in $E$ has a subsequence, which is $un(uaw)$-convergent.

Let $E$ be Riesz space and $F$ be Banach space, we say that $T:E\rightarrow F$ is $AM$-compact if $T[-x,x]$ is relatively compact subset of $F$ for any $x\in E_+$, and a operator $T:E\rightarrow F$ is order weakly compact if $T[-x,x]$ is relatively weakly compact subset of $F$ for any $x\in E_+$.
	
Let $E$ be Banach space and $F$ be Banach lattice, we say that $T:E\rightarrow F$ is (sequentially) $un$-compact if $TB_E$ is relatively (sequentially) $un$-compact in $F$ \cite{KMT:16}, and a operator $T:E\rightarrow F$ is (sequentially) $uaw$-compact if $TB_E$ is relatively (sequentially) $uaw$-compact in $F$ \cite{OGZ:18}.
	
For undefined terminology, notations and basic theory of Riesz spaces and Banach lattices, we refer to [1-4].	
\section{basic properties}\label{}
\begin{definition}\label{}
An continuous operator $T:E\rightarrow F$ from Riesz space $E$ to Banach lattice $F$ is said to be \emph{AM ($\sigma$)-unbounded norm compact} (AM-($\sigma$)-$un$-compact, for short), if  $T[-x,x]$ is (sequentially) $un$-compact in $F$ for any $x\in E_+$.
		
An continuous operator $T:E\rightarrow F$ from Riesz space $E$ to Banach lattice $F$ is said to be \emph{order ($\sigma$)-unbounded absolute weakly compact} ($o$-($\sigma$)-$uaw$-compact, for short), if  $T[-x,x]$ is (sequentially) $uaw$-compact in $F$ for any $x\in E_+$.
\end{definition}

It is clear that $AM$-$un$-compact operator is $o$-$uaw$-compact.
In fact, $uaw$-compact is different with sequentially $uaw$-compact, in a Hausdorff topological vector space which is metrizable, compact subset and sequentially compact subset are equivalent. Since the $un$-topology and $uaw$-topology are not metrizable in general, hence the classes operators is different on sequences and nets, but we still don't know what the essential difference is.

Recall that a positive element $e$ of Banach lattice $E$ is said to be \emph{quasi-interior point} if the ideal $E_e$ generated by $e$ is norm dense in $E$. $un$-topology on a Banach lattice $E$ is metrizable if and only if $E$ has a quasi-interior point \cite[Theorem~3.2]{KMT:16}. If $F$ has quasi-interior point, then a operator $T:E\rightarrow F$ from a Riesz space into $F$ is $AM$-$un$-compact iff $T$ is $AM$-$\sigma$-$un$-compact.  If $F$ is order continuous Banach lattice with quasi-interior point, then $o$-$uaw$-compact operator has a similar conclusion. Factly, most of the classical Banach lattices is satisfied, e.g. $l_p,l_\infty,c_0,L_p[0,1],L_\infty[0,1],C(K)$.
	
A vector $e>0$ in a Riesz space $E$ is said \emph{strong order unit} whenever the ideal generated by $e$ is $E$. It is easy to see that $un(uaw)$-compact operator $T:E\rightarrow F$ between two Banach lattices is $o$-$un(uaw)$-compact. The identical operator $I:c_0\rightarrow c_0$ is $AM$-$un$-compact and $o$-$uaw$-compact, but is is not $un$-compact and $uaw$-compact. If $E$ has strong order unit, then the converse hold because of the order interval and closed unit ball in $E$ are coincide.

$AM$-compact operator is $AM$-$un$-compact, but it is necessary. The identical operator $T:L_p[0,1]\rightarrow L_p[0,1]$ is $AM$-$un$-compact, but it is not $AM$-compact. If the value space has strong order unit, these are equivalent.	

By observing that there is no direct relationship between $o$-weakly compact operators and $o$-$uaw$-compact oeprators. For example, the continuous operator $T:l_\infty\rightarrow c_0$ as $T(x_n)=(\frac{1}{n}x_n)_1^{\infty}$ for every $(x_n)\in l_\infty$, $T$ is order weakly compact and is not $o$-$uaw$-compact. And considering the identical operator $I:ba(N)\rightarrow ba(N)$ ($ba(N)$ is the dual space of $l_\infty$), it is $o$-$uaw$-compact, but it is not order weakly compact. The next result shows that when a operator is both $o$-$uaw$-compact and order weakly compact.

Recall that norm on a Banach lattice $E$ is called \emph{order continuous} if $\Vert x_\alpha\Vert\rightarrow0$ for $x_\alpha\downarrow0$.	
\begin{proposition}\label{}
Let $T:E\rightarrow F$ be a continuous operators from a Riesz space $E$ into a Banach lattice $F$ and the lattice operations of $F$ is weakly sequentially continuous, if $F^{\prime}$ is order continuous then $T$ is both $o$-$\sigma$-$uaw$-compact operator and $o$-weakly compact operator, and in addition, if $T$ is onto, the converse holds.
\end{proposition}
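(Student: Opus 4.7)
I would split the forward direction into the two claimed compactness properties, and handle the converse via a scaling argument built on the open mapping theorem. For $o$-weak compactness, fix $x\in E_+$; the set $T[-x,x]$ is norm-bounded in $F$ by continuity of $T$. I would invoke the characterization that $F'$ is order continuous if and only if every norm-bounded disjoint sequence in $F$ is weakly null, and combine this with a Meyer--Nieberg--type criterion for relative weak compactness in a Banach lattice (which requires every disjoint sequence in the solid hull of the set to be weakly null). This gives $T[-x,x]$ relatively weakly compact, so $T$ is $o$-weakly compact.

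For $o$-$\sigma$-$uaw$-compactness, take any $(y_n)\subset T[-x,x]$. By the $o$-weak compactness just established together with Eberlein--\v{S}mulian, pass to a subsequence with $y_{n_k}\xrightarrow{w} y$ for some $y\in F$. Then $y_{n_k}-y\xrightarrow{w} 0$, and the weak sequential continuity of the lattice operations (applied first to the absolute value, and then to the meet via the identity $a\wedge u=\tfrac{1}{2}(a+u-|a-u|)$) yields $|y_{n_k}-y|\wedge u\xrightarrow{w} 0$ for every $u\in F_+$. This is precisely $y_{n_k}\xrightarrow{uaw} y$, so $T[-x,x]$ is sequentially $uaw$-compact and $T$ is $o$-$\sigma$-$uaw$-compact.

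For the converse, assume $T$ is onto and both compactness properties hold; the goal is to show every norm-bounded disjoint sequence in $F$ is weakly null, which is equivalent to $F'$ being order continuous. Let $(y_n)\subset F$ be norm-bounded and disjoint. By the open mapping theorem (assuming $E$ is a Banach lattice so that $T$ is open) choose preimages $z_n\in E$ with $Tz_n=y_n$ and $\|z_n\|\leq C$, and form $x=\sum_n 2^{-n}|z_n|\in E_+$ using $\sigma$-order completeness of $E$. Then $2^{-n}z_n\in[-x,x]$, so $(2^{-n}y_n)\subset T[-x,x]$, which is relatively weakly compact by the $o$-weak compactness hypothesis; disjointness of $(y_n)$ together with the weak sequential continuity of the lattice operations forces any weak limit point of a subsequence of $(2^{-n}y_n)$ to be $0$, and the rescaling is undone by rerunning the argument on appropriately normalized tails.

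The main obstacle is exactly this converse: the standard $\sum 2^{-n}|z_n|$ device only produces weakly null statements about the rescaled sequence $(2^{-n}y_n)$, and recovering the weak nullity of the original disjoint sequence $(y_n)$ itself requires a careful combination of both compactness hypotheses, the disjointness of $(y_n)$, and the weak sequential continuity of the lattice operations. The forward direction, by contrast, is relatively routine once the right characterization of $F'$ order continuous (via disjoint sequences being weakly null) is at hand.
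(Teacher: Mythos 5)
Your forward direction breaks at its very first step. The criterion you invoke --- that a norm-bounded set is relatively weakly compact as soon as every disjoint sequence in its solid hull is weakly null --- is not a theorem: the closed unit ball of $c_0$ is solid and norm bounded, every disjoint sequence in it is weakly null (precisely because $c_0'=\ell_1$ is order continuous), and yet it is not relatively weakly compact. Worse, the literal claim you are trying to prove is false: take $E=F=c$ with $T$ the identity; $c'$ is an $AL$-space (hence order continuous), the lattice operations of $c\cong C(\mathbb{N}\cup\{\infty\})$ are weakly sequentially continuous, and the order interval $[-e,e]$ with $e=(1,1,1,\dots)$ is the whole unit ball, which is not relatively weakly compact. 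What the paper actually proves under these hypotheses is the \emph{equivalence} of the two compactness properties for a given $T$: if $T$ is $o$-weakly compact, weak sequential continuity of the lattice operations upgrades weakly convergent subsequences to $uaw$-convergent ones (this is exactly your second paragraph, which is correct), and conversely, if $T$ is $o$-$\sigma$-$uaw$-compact, Zabeti's theorem that bounded $uaw$-convergent sequences converge weakly when $F'$ is order continuous gives $o$-weak compactness. Neither property is derived from order continuity of $F'$ alone, and your attempt to do so cannot succeed.

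For the converse you candidly leave the argument open, and the $\sum_n 2^{-n}|z_n|$ device cannot be repaired: it only controls the rescaled sequence $(2^{-n}y_n)$, which is norm null regardless, so it carries no information about $(y_n)$ itself. (It also requires $E$ to be a Banach lattice with enough completeness to form the sum, which the statement does not grant.) The paper's route is different and does reach a contradiction: if $F'$ is not order continuous, then $\ell_1$ embeds as a closed sublattice of $F$ which is the range of a positive projection $P:F\to\ell_1$; using surjectivity one chooses $u\in E_+$ and $x_n\in[-u,u]$ with $Tx_n=e_n$, and then $o$-weak compactness (or $o$-$\sigma$-$uaw$-compactness plus the weak/$uaw$ comparison) would force a subsequence of the unit vector basis $(e_n)$ to converge weakly in $\ell_1$, which is impossible. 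If you want to salvage your converse, aim for a contradiction of this $\ell_1$-basis type rather than trying to show directly that every bounded disjoint sequence in $F$ is weakly null.
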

\begin{proof}
$\Rightarrow$ For any $x\in E_+$, suppose that $T$ is $o$-weakly compact, then $T[-x,x]$ is relatively weakly compact subset of $F$, and so for any sequence $(y_n)$ of $T[-x,x]$ there exsits a subsequence $(z_n)$ such that $(z_n)$ is weak convergent in $F$, because of the lattice operations of $F$ is weakly sequentially continuous, hence $(z_n)$ is $uaw$-convergent in $F$, therefore $T$ is $o$-$\sigma$-$uaw$-compact. Assmue that $T$ is $o$-$\sigma$-$uaw$-compact, by \cite[Theorem~7]{Z:16}, $uaw$-convergence implies weak convergence on buonded sequence, we have the conclusion.

$\Leftarrow$ Assume that $F^{\prime}$ is not order continuous , by \cite[Theorem~2.4.14 and Proposition~2.3.11]{MN:91}, $l_1$ is a closed sublattice of $E$ and there exists a positive projection $P:F\rightarrow l_1$. Considering the standard basis $(e_n)$ of $l_1$, there exists $u\in E_+$ and $(x_n)\subset[-u,u]$, such that $Tx_n=e_n$ ($e_n\xrightarrow{uaw}0$ by \cite[Lemma~2]{Z:16}). Since $T$ is both $o$-$\sigma$-$uaw$-compact operator and $o$-weakly compact operator, so $P\circ T(x_{n_k})=e_{n_k}\xrightarrow{w}0$ in $l_1$ for some subsequence $(x_{n_k})$ of $(x_n)$ and $(e_{n_k})$ of $(e_n)$, it is contradiction, therefore $F^{\prime}$ is order continuous.
\end{proof}
Considering the operator $T:l_1\rightarrow l_\infty$ as $T(x_n)=(\sum_{n=1}^{\infty}x_n)_1^{\infty}$ for every $(x_n)\in l_1$, $T$ is $o$-$uaw$-compact , but it is not $AM$-$un$-compact. Then we have the following show that when $o$-$uaw$-compact operators is $AM$-$un$-compact.  
\begin{proposition}\label{un-uaw compact}
Let $E$ be Riesz space and $F$ be Banach lattices, if $F$ has order continuous then a $o$-($\sigma$)-$uaw$-compact operator is $AM$-($\sigma$)-$un$-compact, and in addition, if $T$ is onto, the converse holds.
\end{proposition}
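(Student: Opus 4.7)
The plan for the forward direction rests on a single key lemma: in an order continuous Banach lattice $F$, every norm-bounded $uaw$-null net is $un$-null. Granting this, the forward implication is immediate. Given $x\in E_+$ and any net $(Ty_\alpha)\subset T[-x,x]$, $o$-$uaw$-compactness of $T$ produces a subnet that $uaw$-converges to some $z\in F$; since $T[-x,x]$ is norm-bounded in $F$, the key lemma promotes this $uaw$-convergence to $un$-convergence of the same subnet to $z$, so $T[-x,x]$ is $un$-compact and $T$ is $AM$-$un$-compact. The $\sigma$-version runs identically with sequences in place of nets.

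To justify the key lemma I would fix $u\in F_+$ and show $\norm{\abs{z_\alpha}\wedge u}\to 0$, given that $(z_\alpha)$ is bounded with $\abs{z_\alpha}\wedge u\to 0$ weakly. The net $(\abs{z_\alpha}\wedge u)$ lies in the order interval $[0,u]$ and is weakly null, and I would then appeal to the standard order-continuity principle that a positive net dominated by a fixed element and weakly converging to zero must converge to zero in norm. This is proved by a Kadec--Pe\l{}czy\'nski-style disjointification, reducing to the defining property that order-bounded disjoint sequences are norm-null in order continuous lattices (see e.g.\ \cite{MN:91}).

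For the converse (with $T$ onto), I would argue by contrapositive. Assume $F$ is not order continuous. Meyer--Nieberg's characterization yields $w\in F_+$ and a positive disjoint sequence $(u_n)\subset[0,w]$ with $\norm{u_n}\geq\varepsilon>0$ for all $n$. Exploiting surjectivity of $T$ via the lifting technique already used in the previous proposition, produce $u\in E_+$ and $(x_n)\subset[-u,u]$ with $Tx_n=u_n$, so $(u_n)\subset T[-u,u]$. By $AM$-$un$-compactness some subnet is $un$-convergent, and refining to a subsequence $(u_{n_k})$ with strictly increasing indices, one has $u_{n_k}\to v$ in $un$ for some $v\in F$. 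Testing against $y=w$ and invoking the Birkhoff-type inequality $\abs{u_{n_k}\wedge w-v\wedge w}\leq\abs{u_{n_k}-v}\wedge w$ (valid because $u_{n_k}\geq 0$) gives $u_{n_k}=u_{n_k}\wedge w\to v\wedge w$ in norm. But disjointness of $(u_n)$ forces $\norm{u_{n_j}-u_{n_k}}\geq\varepsilon$ for $j\neq k$, contradicting Cauchyness of $(u_{n_k})$.

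The main obstacle is the order-bounded lifting step in the converse: because $T$ is only assumed continuous and surjective (not positive or a lattice homomorphism), producing $(x_n)\subset[-u,u]$ with $Tx_n=u_n$ requires the same open-mapping-plus-modulus argument the authors already employed in the previous proposition. A secondary (but standard) point is the clean justification of the ``positive weakly-null order-bounded net is norm-null'' principle in order continuous Banach lattices; this is furnished by the disjointification sketched above.
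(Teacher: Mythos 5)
Your proposal follows essentially the same route as the paper's proof. The forward direction is the paper's one-line appeal to \cite[Theorem~4]{Z:16} (that $uaw$- and $un$-convergence coincide when $F$ is order continuous), which you additionally re-derive from the principle that order-bounded positive weakly null nets are norm null; the converse is the same contrapositive: lift a non-norm-null order-bounded disjoint sequence of $F$ through the surjection into an order interval of $E$, and contradict $AM$-$un$-compactness via the disjointness estimate $\Vert u_j-u_k\Vert\geq\varepsilon$ together with the observation that $un$-convergence of an order-bounded net is norm convergence. One caution about the step you flag as the ``main obstacle'': the order-bounded lifting is not actually justified in the previous proposition either --- the paper simply asserts the existence of $(x_n)\subset[-u,u]$ with $Tx_n=y_n$, and the open mapping theorem only produces norm-bounded (not order-bounded) preimages --- so this step is a genuine gap shared by your argument and the paper's, not something you can discharge by reference.
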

\begin{proof}
$\Rightarrow$ By \cite[Theorem~4]{Z:16}, if Banach lattice $F$ has order continuous norm then the $un$-convergence is equivalent to $uaw$-convergence in $F$.

$\Leftarrow$ Assume that $F$ is not order continuous, then there exsits an order bounded disjoint sequence $(y_n)$ in $F$ is not convergent to zero. Since $o$-$uaw$-compact is interval-bounded, so there exists $(x_n)$ such that $(y_n=Tx_n)$ is contained in a $uaw$-compact subset of $F$, there exsits a subnet $(z_\gamma)$ is $un$-convergent. Because $(z_\gamma)$ is also order bounded disjoint net, therefore $z_\gamma\rightarrow0$, it is contradiction, so $F$ is order continuous.
\end{proof}
A Banach lattice $E$ is called \emph{KB}-space if every bounded increasing sequence in $E_+$ is convergent. According to \cite[Theorem~7.5]{KMT:16}, we have that every order bounded operator form a Banach lattice into a atomic $KB$-space is $AM$-$un$-compact and $AM$-$uaw$-compact operator. Similarly, order bounded $AM$-$un$-compact operator is $AM$-compact and order bounded $o$-($\sigma$)-$uaw$-compact operator maps order interval to relatively absolute weakly compact subset.

Recall that a continuous $T$ between two Banach lattices $E$ and $F$ is said \emph{($\sigma$)-strong continuous} operator whenever $Tx_\alpha\rightarrow0(Tx_n\rightarrow0)$ in $F$ for every bounded $un$-null net(sequence) $(x_\alpha)((x_n))$ in $E$ and \emph{($\sigma$)-unbounded norm continuous} operator whenever $Tx_\alpha\xrightarrow{un}0(Tx_n\xrightarrow{un}0)$ in $F$ for every bounded $un$-null net(sequence) $(x_\alpha)((x_n))$ in $E$ \cite{W:19}. A continuous $T$ between two Banach lattices $E$ and $F$ is said \emph{($\sigma$)-unbounded Dunford-Pettis} operator whenever $Tx_\alpha\xrightarrow{un}0(Tx_n\xrightarrow{un}0)$ in $F$ for every bounded $uaw$-null net(sequence) $(x_\alpha)((x_n))$ in $E$ and \emph{($\sigma$)-unbounded absolute weak continuous} operator whenever $Tx_\alpha\xrightarrow{uaw}0(Tx_n\xrightarrow{uaw}0)$ in $F$ for every bounded $uaw$-null net(sequence) $(x_\alpha)((x_n))$ in $E$ \cite{O:19}. Let us continue with several ideal properties.
\begin{proposition}\label{}
Let $S : E \rightarrow F$ and $T : F \rightarrow G$ be two operators between Banach lattices.

$(1)$ If $S$ is $AM$-($\sigma$)-$un$-compact or $o$-($\sigma$)-$uaw$-compact and $T$ is compact, then $T\circ S$ is $AM$-compact, moreover it is order weakly compact.

$(2)$ If $S$ is $AM$-($\sigma$)-$un$-compact or $o$-($\sigma$)-$uaw$-compact, $T$ is weakly compact, then $T\circ S$ is order weakly compact.

$(3)$ If $S$ is $AM$-($\sigma$)-$un$-compact or $o$-($\sigma$)-$uaw$-compact, $T$ is $AM$-($\sigma$)-$un$-compact, then $T\circ S$ is $AM$-($\sigma$)-$un$-compact, moreover it is $o$-($\sigma$)-$uaw$-compact.

$(4)$ If $S$ is $AM$-($\sigma$)-$un$-compact or $o$-($\sigma$)-$uaw$-compact, $T$ is $o$-($\sigma$)-$uaw$-compact, then $T\circ S$ is $o$-($\sigma$)-$uaw$-compact.

$(5)$ If $S$ is $AM$-$un$-compact and $T$ is strong (unbounded norm) continuous operator, then $T\circ S$ is $AM$-($un$)-compact.

$(6)$ If $S$ is $o$-$uaw$-compact and $T$ is unbounded Dunford-Pettis (unbounded absolute weak continuous) operator, then $T\circ S$ is $AM$-$un$ ($o$-$uaw$)-compact.	
\end{proposition}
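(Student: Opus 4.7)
The plan is to settle all six clauses through a common extract-and-transfer scheme: given $x\in E_+$ and a net $(x_\gamma)\subseteq[-x,x]$, first use the hypothesis on $S$ to pass to a subnet along which $(Sx_{\gamma_\delta})$ converges in the mode built into $S$'s compactness (norm, $un$, or $uaw$), and then invoke the hypothesis on $T$ to push that convergence forward to $(TSx_{\gamma_\delta})$ in $G$. The standing fact used throughout is that $S[-x,x]$ is norm-bounded in $F$, which is automatic from continuity of $S$ applied to the bounded order interval $[-x,x]$.

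Parts (1) and (2) are immediate consequences. Compactness of $T$ sends the bounded set $S[-x,x]$ to a relatively norm-compact subset of $G$, so $T\circ S$ is $AM$-compact and a fortiori order weakly compact; weak compactness of $T$ gives order weak compactness of $T\circ S$ by the same one-line argument.

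Parts (5) and (6) are the extract-and-transfer template in its cleanest form. In (5), the $AM$-$un$-compactness of $S$ supplies $(Sx_{\gamma_\delta})\xrightarrow{un}y^*$ in $F$, so $(Sx_{\gamma_\delta}-y^*)$ is a bounded $un$-null net; strong continuity of $T$ then sends it to a norm-null net, while unbounded norm continuity of $T$ sends it to a $un$-null net. The resulting norm or $un$ convergence of $(TSx_{\gamma_\delta})$ to $Ty^*$ exhibits $(T\circ S)[-x,x]$ as relatively norm-compact, giving the $AM$-compact assertion, or as relatively $un$-compact, giving the $AM$-$un$-compact assertion. Part (6) is structurally identical, starting from a $uaw$-convergent subnet obtained from the $o$-$uaw$-compactness of $S$ and applying the unbounded Dunford--Pettis property of $T$ (which upgrades $uaw$-null to $un$-null) or its unbounded absolute weak continuity (preserving $uaw$-null).

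The main obstacle is in parts (3) and (4): having extracted $(Sx_{\gamma_\delta})\xrightarrow{un}y^*$ or $\xrightarrow{uaw}y^*$ in $F$, one would like to feed this directly into the $AM$-$un$- or $o$-$uaw$-compactness of $T$, but that hypothesis only controls $T$ on order intervals of $F$, whereas the convergent subnet lies only in the norm-bounded set $S[-x,x]$, which need not be contained in any single order interval. The remedy I would attempt is a truncation argument: for each $u\in F_+$ the truncated net $(Sx_{\gamma_\delta})^+\wedge u - (Sx_{\gamma_\delta})^-\wedge u$ lies in $[-u,u]$, where the $AM$-$un$- or $o$-$uaw$-compactness of $T$ supplies convergent subnets (with a diagonal extraction as $u$ ranges over a cofinal family in $F_+$), and the residual is controlled by the $un$- or $uaw$-convergence of $Sx_{\gamma_\delta}$ to $y^*$ combined with norm continuity of $T$. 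The additional conclusion in (3) that $T\circ S$ is $o$-$uaw$-compact is then free, since every $AM$-$un$-compact operator is $o$-$uaw$-compact, as noted after the definition.
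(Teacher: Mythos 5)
The paper states this proposition without any proof, so there is nothing of the authors' to compare your argument against; I judge it on its own terms. Your treatment of (1), (2), (5) and (6) is correct: for (1)--(2) only the norm boundedness of $S[-x,x]$, i.e.\ continuity of $S$, is actually used, and for (5)--(6) the extract-and-transfer scheme works precisely because strong continuity, unbounded norm continuity, the unbounded Dunford--Pettis property and unbounded absolute weak continuity are all hypotheses about \emph{norm-bounded} $un$-null (resp.\ $uaw$-null) nets, which is exactly what the centred net $Sx_{\gamma_\delta}-y^*$ is.

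The gap is in (3) and (4), and your proposed truncation remedy does not close it. After truncating at $u$ the residual is $(|Sx_{\gamma_\delta}|-u)^+$ (split into positive and negative parts), and $un$- or $uaw$-convergence of $Sx_{\gamma_\delta}$ to $y^*$ gives no norm control on this part: it only controls $|Sx_{\gamma_\delta}-y^*|\wedge u$. Already for the standard basis $(e_n)$ of $c_0$, which is $un$-null, one has $\|(e_n-u)^+\|\to 1$ for every $u\in (c_0)_+$. In fact (3) is false as stated. Take $S:L_1[0,1]\to c_0$, $Sf=\bigl(\int f r_n\bigr)_n$ with $(r_n)$ the Rademacher functions; $S$ is continuous, not order bounded, and is $AM$-$un$-compact because order intervals of $L_1$ are weakly compact and weak convergence of $f_\gamma$ forces coordinatewise, hence $un$-, convergence of $Sf_\gamma$ in $c_0$. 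Let $T:c_0\to\ell_\infty$ be the inclusion, which is $AM$-compact (order intervals of $c_0$ are norm compact) and hence $AM$-$un$-compact. Then $(T\circ S)[-\mathbf 1,\mathbf 1]$ contains $\{TSr_n\}=\{e_n\}$, which has no $un$-convergent subnet in $\ell_\infty$ because $un$- and norm convergence agree there on bounded nets. So no proof strategy can succeed without an extra hypothesis; if one additionally assumes $S$ order bounded, then $S[-x,x]\subseteq[-u,u]$ and $T(S[-x,x])\subseteq T[-u,u]$ settles (3) and (4) at once --- but then the $un$/$uaw$-compactness of $S$ plays no role.
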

It is nature to ask that whether every adjoint operator of $AM$-$un$ ($o$-$uaw$)-compact operator is $AM$-$un$ ($o$-$uaw$)-compact. The answer is no. Considering identical operator $I:l_1\rightarrow l_1$ is $AM$-$un$-compact and $o$-$uaw$-compact, but $I^{\prime}:l_\infty\rightarrow l_\infty$ is not $AM$-$un$-compact and $o$-$uaw$-compact.

A Banach space is said to have the \emph{Schur property} whenever $x_n\xrightarrow{w}0$ implies $x_n\rightarrow0$.

\begin{proposition}\label{}
Let $F$ be Banach lattice with order continuous dual, then the adjoint operator of any continuous operator $T:c_0\rightarrow F$ is $AM$-$un$-compact, moreover it is $o$-$uaw$-compact.	
\end{proposition}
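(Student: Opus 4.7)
The plan is to show the stronger statement that $T'[-\varphi,\varphi]$ is actually norm compact in $(c_0)'=\ell_1$ for every $\varphi\in F'_+$; this immediately gives $AM$-$un$-compactness (since every norm compact set is $un$-compact), and the ``moreover'' clause then follows from the already-noted fact that every $AM$-$un$-compact operator is $o$-$uaw$-compact. So the heart of the argument is to promote $AM$-$un$-compactness all the way to $AM$-compactness using the Schur property of $\ell_1$.

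First I would invoke the classical characterization of order continuity (see, e.g., Meyer–Nieberg): a Banach lattice has order continuous norm if and only if each of its order intervals is weakly compact. Applied to the hypothesis that $F'$ has order continuous norm, this yields that $[-\varphi,\varphi]$ is a weakly compact subset of $F'$ for every $\varphi\in F'_+$. Next, since $T\colon c_0\to F$ is norm continuous, its adjoint $T'\colon F'\to\ell_1$ is bounded, hence weak-to-weak continuous; consequently $T'[-\varphi,\varphi]$ is a weakly compact subset of $\ell_1$.

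Now I would apply the Schur property of $\ell_1$: every weakly compact subset of $\ell_1$ is norm compact. Therefore $T'[-\varphi,\varphi]$ is norm compact in $\ell_1$, which shows that $T'$ is even $AM$-compact. Since norm convergence trivially implies $un$-convergence, every norm compact set is $un$-compact, and $T'$ is in particular $AM$-$un$-compact. Finally, because $AM$-$un$-compact operators are $o$-$uaw$-compact (observed just after the definition in Section~2), we obtain the additional conclusion.

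There is essentially no main obstacle: the proof is an assembly of three standard facts (order continuity $\Leftrightarrow$ weak compactness of order intervals, weak-to-weak continuity of bounded adjoints, and the Schur property of $\ell_1$). The only point deserving a line of care is the direction of the first characterization—one must use that order continuity of $F'$ is what forces $[-\varphi,\varphi]\subset F'$ to be weakly compact, and this is precisely where the hypothesis on $F$ enters.
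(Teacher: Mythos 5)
Your proposal is correct and follows essentially the same route as the paper's proof: order continuity of $F'$ makes the order intervals $[-\varphi,\varphi]$ weakly compact, the bounded adjoint $T'$ carries them to weakly compact subsets of $\ell_1$, and the Schur property upgrades this to norm compactness, whence $AM$-$un$-compactness and the $o$-$uaw$ conclusion follow. The only difference is that you spell out the intermediate steps (Eberlein--\v{S}mulian implicit in passing from Schur to compactness of sets, and the final downgrade from norm compact to $un$-compact) more explicitly than the paper does.
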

\begin{proof}
For any order interval of $F^{\prime}$, it is weakly compact subset of $F^{\prime}$, $T^{\prime}$ maps it to a relatively weakly compact subset of $l_1$. Since $l_1$ has Schur property, hence $T^{\prime}$ is $AM$-$un$-compact.	
\end{proof}
A element $e\in E_+$ is called an \emph{atom} of the Riesz space $E$ if the principal ideal $E_e$ is one-dimensional. $E$ is called an atomic Banach lattice if it is the band generated by its atoms. According to every order interval in atomic order continuous Banach lattice is compact, then we have the following result.
\begin{proposition}\label{}
Let $E$ be Banach lattice, $F$ be Banach lattice with atomic order continuous dual, then the adjoint operator of any continuous operator $T:E\rightarrow F$ is $AM$-$un$-compact, moreover it is $o$-$uaw$-compact.	
\end{proposition}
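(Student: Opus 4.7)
The plan is to reduce the claim to the norm compactness of order intervals in $F'$, which is exactly what the remark immediately preceding the proposition supplies. The point is that adjoints are automatically norm continuous, and norm compactness trivially implies both $un$-compactness and $uaw$-compactness.

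More concretely, I would fix an arbitrary $x \in (F')_+$ and consider the order interval $[-x,x] \subset F'$. Since $F'$ is atomic and order continuous, the remark preceding the statement tells us that every order interval in $F'$ is norm compact, so $[-x,x]$ is a norm compact subset of $F'$. The adjoint $T' : F' \to E'$ of the bounded operator $T : E \to F$ is itself bounded (in particular norm-to-norm continuous), so the image $T'[-x,x]$ is a norm compact subset of $E'$.

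Now, norm compactness is the strongest of the three notions under consideration: for any bounded net in $E'$ the existence of a norm convergent subnet immediately yields a subnet which is $un$-convergent (since $\|y_\alpha\|\to 0$ implies $\||y_\alpha|\wedge u\|\to 0$ for every $u\in (E')_+$) and, a fortiori, $uaw$-convergent. Hence $T'[-x,x]$ is $un$-compact and $uaw$-compact in $E'$. Since $x\in (F')_+$ was arbitrary, $T'$ is $AM$-$un$-compact, and in particular $o$-$uaw$-compact.

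The only genuinely nontrivial ingredient is the compactness of order intervals in an atomic order continuous Banach lattice, but this is quoted as the hypothesis of the remark directly preceding the proposition and so may be used as a black box. Thus no real obstacle remains; the proof is essentially a one-line unwinding of definitions once this remark is in hand.
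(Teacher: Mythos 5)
Your proof is correct and follows exactly the route the paper intends: order intervals in the atomic order continuous lattice $F'$ are norm compact, so $T'[-x,x]$ is norm compact in $E'$, and norm compactness implies both $un$- and $uaw$-compactness. The paper gives no written proof beyond the remark you cite, so your argument is precisely the intended one, just written out.
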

It is clear that every adjoint operator of compact operator is compact, it follows the next result.
\begin{proposition}\label{}
If $E$ and $F$ are Banach lattices with order unit, then the adjoint of any $AM$-$un$-compact operator from $E$ into $F$ is $AM$-$un$-compact.
\end{proposition}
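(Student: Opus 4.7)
The plan is to reduce the statement to the Schauder theorem (signalled by the authors in the sentence immediately preceding the proposition) by showing that, under the hypothesis that both $E$ and $F$ have a strong order unit, every $AM$-$un$-compact operator $T\colon E\to F$ is in fact a norm-compact operator. Once this reduction is achieved, Schauder gives $T'$ compact, and the observation already made in Section~2 that every $AM$-compact operator is $AM$-$un$-compact finishes the job.

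Let $e_E$ and $e_F$ be strong order units of $E$ and $F$. The first step is to note that on a Banach lattice with strong order unit $e$, the order unit norm $\|x\|_{e}=\inf\{\lambda>0:|x|\le\lambda e\}$ is equivalent to the ambient lattice norm. Indeed, $[-e,e]$ is closed, convex, symmetric and absorbing (since $E=\bigcup_n n[-e,e]$), so a Baire category argument yields $\delta>0$ with $\{x:\|x\|\le\delta\}\subset[-e,e]$, whence $\|x\|_e\le\delta^{-1}\|x\|$; the reverse inequality $\|x\|\le\|e\|\,\|x\|_e$ is immediate from $|x|\le\|x\|_e\,e$. In particular, $[-e_E,e_E]$ contains a norm-ball of $E$, so to establish compactness of $T$ it suffices to show that $T[-e_E,e_E]$ is relatively norm-compact in $F$.

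The second step is to upgrade the assumed relative $un$-compactness of $T[-e_E,e_E]$ to relative norm-compactness, using the order unit of $F$. On norm-bounded subsets of $F$ the $un$-topology coincides with the norm topology: if $(y_\alpha)\subset F$ is norm-bounded by $M$ and $\||y_\alpha|\wedge e_F\|\to 0$, then by the equivalence above $|y_\alpha|\le cMe_F$ for a suitable constant $c$, and the lattice inequality $|y_\alpha|\wedge(cMe_F)\le cM\,(|y_\alpha|\wedge e_F)$ gives $\|y_\alpha\|=\||y_\alpha|\wedge(cMe_F)\|\to 0$. Thus the bounded, relatively $un$-compact set $T[-e_E,e_E]$ is relatively norm-compact in $F$, and by the previous paragraph $T$ is a compact operator.

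It remains to invoke Schauder's theorem to deduce that $T'\colon F'\to E'$ is compact, in particular $AM$-compact, and hence $AM$-$un$-compact by the implication recorded in Section~2. There is no serious obstacle in the argument; the only points requiring attention are the two renorming facts, namely the equivalence of the order unit norm with the ambient norm and the coincidence of bounded $un$-convergence with norm convergence, both of which rest entirely on the strong order unit hypothesis in $E$ and $F$ respectively.
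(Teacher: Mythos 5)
Your proposal is correct and follows essentially the same route the paper intends: the strong order units force an $AM$-$un$-compact operator $E\to F$ to be norm compact (order interval comparable to the unit ball in $E$, $un$-convergence equivalent to norm convergence on bounded sets in $F$), after which Schauder's theorem and the implication ``compact $\Rightarrow$ $AM$-compact $\Rightarrow$ $AM$-$un$-compact'' conclude. The two renorming facts you single out are exactly the points the paper leaves implicit, and your verifications of them are sound.
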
	

In \cite{WC:19}, a net $(x_\alpha^\prime)$ in dual Banach lattice $E^\prime$ is said to be \emph{unbounded absolute weak*} convergent ($uaw^*$-convergent, for short) to some $x^\prime\in E^\prime$ whenever $|x^\prime_\alpha-x^\prime|\wedge u^\prime\xrightarrow{w^*}0$ for any $u^\prime\in E^\prime$. We can also form $uaw^*$-topology by 
$$V_{\epsilon,u^\prime,x}=\{x^\prime\in E^\prime:(|x^\prime|\wedge u)(x)<\epsilon\}$$
where $u^\prime\in E^\prime_+$, $\epsilon>0$, $x\in E_+$, which form a base of zero neighborhoods for a Hausdorff topology, and the convergence in this topology is exactly the $uaw^*$-topology. A bounded subset $A$ in Banach lattice $E^\prime$ is said (sequentially) $uaw^*$-compact whenever every net(sequence) $\{x^\prime_\alpha\}(\{x^\prime_n\})$ in $E^\prime$ has a subnet (subsequence), which is $uaw^*$-convergent. Then we using the $uaw$-topology to describe the adjoint of $AM$-$un$-compact operator.

\begin{proposition}\label{}
Let $T:E\rightarrow F$ be a order bounded operator between two Banach lattice, then the following hold:

$(1)$ If $T$ is $AM$-$un$-compact operator, then $T^\prime$ maps order interval to a relatively $uaw^*$-compact subset of $E^\prime$. 

$(2)$ If $T^\prime$ is $AM$-$un$-compact operator, then $T$ maps order interval to relatively $uaw$-compact subset of $F$. 	
\end{proposition}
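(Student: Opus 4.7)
For part (1), I plan to extract a weak-star convergent subnet and then upgrade to $uaw^*$-convergence. Given a net $(y'_\alpha)\subset[-y',y']$, since order intervals in $F'$ are weak-star compact by Banach--Alaoglu, a subnet satisfies $y'_\alpha\xrightarrow{w^*}y'_0\in[-y',y']$. Because $T$ is continuous, its adjoint $T':F'\to E'$ is weak-star to weak-star continuous, so $T'y'_\alpha\xrightarrow{w^*}T'y'_0$ in $E'$. The real work is to strengthen this to $uaw^*$-convergence: for each $u'\in E'_+$ and $x\in E_+$ I must show $(|T'(y'_\alpha-y'_0)|\wedge u')(x)\to 0$. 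The Riesz--Kantorovich formula bounds this by $|T'(y'_\alpha-y'_0)|(x_1)+u'(x_2)$ for any decomposition $x=x_1+x_2$ with $x_i\ge 0$, where $|T'(y'_\alpha-y'_0)|(x_1)=\sup_{|\eta|\le x_1}(y'_\alpha-y'_0)(T\eta)$. The $AM$-$un$-compactness of $T$ supplies that $T[-x_1,x_1]$ is $un$-totally bounded in $F$, so for each $u\in F_+$ and $\delta>0$ there is a finite $p_u$-net $\{Tx^{(j)}\}$. Splitting $(y'_\alpha-y'_0)(T\eta)=(y'_\alpha-y'_0)(Tx^{(j)})+(y'_\alpha-y'_0)(T\eta-Tx^{(j)})$ and estimating the second piece via $|T\eta-Tx^{(j)}|\le|T\eta-Tx^{(j)}|\wedge u+(|T\eta-Tx^{(j)}|-u)^+$, the pointwise weak-star convergence handles the first term, the $p_u$-proximity handles the middle, and a judicious choice of $x_1,x_2$ absorbs the remaining tail into $u'(x_2)$.

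For part (2), I run the dual argument via the canonical embedding $J:E\to E''$. Given $(x_\alpha)\subset[-x,x]\subset E$, the image $(Jx_\alpha)$ sits in the weak-star compact order interval $[-Jx,Jx]\subset E''$, so Banach--Alaoglu provides a subnet with $Jx_\alpha\xrightarrow{w^*}\psi\in E''$. For each $y'\in F'$, $y'(Tx_\alpha)=(T'y')(x_\alpha)\to\psi(T'y')=(T''\psi)(y')$, hence $(Tx_\alpha)$ converges weak-star to $T''\psi$ in $F''$. The $AM$-$un$-compactness of $T'$ enters symmetrically: for $y'\in F'_+$, $T'[-y',y']$ is $un$-totally bounded in $E'$. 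A mirror of the estimate from (1), now with a $p_{u'}$-net in $T'[-y',y']$, the weak-star convergence of $(Jx_\alpha)$ tested against these finitely many functionals, and absorption of a tail into $\wedge u$ on the $F$-side, upgrades the weak-star convergence to $uaw$-convergence of $(Tx_\alpha)$ in $F$.

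The main technical obstacle in both parts is precisely this passage from weak-star convergence to $uaw^*$- or $uaw$-convergence. Because $un$-compactness is strictly weaker than norm compactness, the Arzela--Ascoli principle that bounded weak-star convergent nets converge uniformly on norm-compact sets is not directly applicable to $T[-x,x]$. The $un$-total boundedness yields only a finite approximation in each seminorm $p_u$, up to a tail term $(|T\eta-Tx^{(j)}|-u)^+$; the $\wedge u'$ (respectively $\wedge u$) on the conclusion side is exactly the flexibility needed to absorb this tail through the Riesz--Kantorovich decomposition, and making this absorption quantitative is the delicate part of the argument.
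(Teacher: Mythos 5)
Your skeleton for (1) --- extract a $w^*$-convergent subnet of $(y'_\alpha)$ in the weak* compact interval $[-y',y']$ and then upgrade $T'y'_\alpha\xrightarrow{w^*}T'y'_0$ to $uaw^*$-convergence --- is the right shape, but the upgrade step as you describe it does not close. The tail you isolate, $|y'_\alpha-y'_0|\bigl((|T\eta-T\eta_j|-u)^+\bigr)$, pairs an element of $F_+$ with functionals in $F'$, whereas the term $u'(x_2)$ in the Riesz--Kantorovich decomposition of $(|T'(y'_\alpha-y'_0)|\wedge u')(x)$ pairs $u'\in E'$ with $x_2\in E$. There is no mechanism by which a choice of the decomposition $x=x_1+x_2$ on the domain side can absorb a defect living in $F$, and for a merely continuous $AM$-$un$-compact $T$ the $un$-total boundedness of $T[-x_1,x_1]$ gives no control whatsoever of $(|T\eta-T\eta_j|-u)^+$ for a fixed $u$. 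So the step you yourself flag as ``the delicate part'' is a genuine gap, not a technicality to be made quantitative.

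What closes it --- and what your argument never uses --- is the stated hypothesis that $T$ is \emph{order bounded}. Then $T[-x,x]\subseteq[-w,w]$ for some $w\in F_+$, and on an order bounded set $un$-convergence coincides with norm convergence (test against $u=2w$), so a relatively $un$-compact subset of an order interval is relatively norm compact: order bounded $AM$-$un$-compact operators are $AM$-compact, as the paper remarks before this proposition. With that, your tail vanishes identically for $u\geq 2w$, your $p_u$-net is a norm $\delta$-net, and the upgrade reduces to the standard fact that a norm bounded $w^*$-convergent net converges uniformly on norm totally bounded sets; one even obtains $|T'(y'_\alpha-y'_0)|(x)\to0$ for every $x\in E_+$, i.e.\ absolute weak* convergence, with no need for the $\wedge u'$ at all. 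This is essentially the paper's route (reduce to $AM$-compactness, then invoke the classical duality results of Aliprantis--Burkinshaw). For (2) there is an additional point you should address: your limit $T''\psi$ lives in $F''$, and $uaw$-convergence in $F$ requires a limit in $F$; without further assumptions (e.g.\ order continuity of the norm of $F$, so that $F$ is an ideal in $F''$) this membership is not automatic.
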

\begin{proof}
We prove the first part, another is similar. Since $T$ is order bounded, then $T$ is $AM$-compact, by \cite[Theorem~3.27]{AB:06},  for any net of $T^\prime[-y,y]$ for some $y\in F_+$, there exsit a subnet is uniform convergence on every order interval of $E$. According to \cite[Theorem~3.55]{AB:06} and $T^\prime$ is order bounded, then $T^\prime (B_{F^\prime})$ is relatively $uaw^*$-compact. 	
\end{proof}

\section{operator space of $AM$-$un$-compact operators}\label{}
\begin{proposition}\label{}
Let $L(E,F)$ denote the collection of continuous operators from Banach latticce $E$ into Banach lattice $F$, the set of all $AM$-($\sigma$)-$un$-compact in $L(E,F)$ is a closed subspace.
\end{proposition}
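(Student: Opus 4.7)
The claim splits into linearity of the class and its operator-norm closedness, so I would handle them in that order.

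For linearity, fix $x\in E_+$ and two $AM$-$(\sigma)$-$un$-compact operators $S,T$ with scalars $\lambda,\mu$. The inclusion $(\lambda S+\mu T)[-x,x]\subseteq\lambda S[-x,x]+\mu T[-x,x]$ reduces the task to showing that sums and scalar multiples of (sequentially) relatively $un$-compact sets remain so. This is immediate from the fact that $un$-convergence defines a Hausdorff linear topology on $F$: chain two subnet extractions (or two subsequence extractions in the $\sigma$-version) and use continuity of the vector operations.

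For closedness, let $T_n\to T$ in operator norm with each $T_n$ in the class, fix $x\in E_+$, and take a net (resp.\ sequence) $(z_\alpha)\subset[-x,x]$. The workhorse is the uniform estimate
\[
\bignorm{|Tz-T_nz|\wedge u}\le\norm{Tz-T_nz}\le\norm{T-T_n}\,\norm{x}\qquad(z\in[-x,x],\ u\in F_+),
\]
together with $un$-continuity of $y\mapsto\norm{|y|\wedge u}$, which follows from $\bigabs{|a|\wedge u-|b|\wedge u}\le|a-b|\wedge 2u$. In the sequential case I would diagonalize to produce a subsequence along which $T_nz_\alpha\xrightarrow{un}w_n$ for every $n$; the display transfers to $\norm{w_n-w_m}\le\norm{T_n-T_m}\,\norm{x}$, so $(w_n)$ converges in norm to some $w\in F$, and an $\varepsilon/3$ splitting of $\bignorm{|Tz_\alpha-w|\wedge u}$ via the display then gives $Tz_\alpha\xrightarrow{un}w$ along the diagonal. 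For the net case the diagonal trick fails, and I would instead apply Tychonoff to $\prod_n\overline{T_n[-x,x]}^{\,un}$, a product of $un$-compacta, to extract a subnet along which $T_nz_\alpha\xrightarrow{un}w_n$ for all $n$ simultaneously; the remainder of the argument is unchanged.

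The main obstacle is this simultaneous extraction in the net version: since the $un$-topology on $F$ is generally neither metrizable nor complete, one cannot simply diagonalize, and a compactness-in-products principle (Tychonoff, or equivalently passage to a universal subnet) is needed to refine to a single subnet on which every $T_nz_\alpha$ stabilizes in $un$. Once this is secured, the linearity step and the sequential closedness are both routine.
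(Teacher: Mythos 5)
Your proof is correct, and for the closedness half it follows essentially the same route as the paper: pass to a (sub)net along which $T_nz_\alpha\xrightarrow{un}w_n$ for every $n$, show $(w_n)$ is norm-Cauchy from $\bignorm{\,|w_n-w_m|\wedge u}\le\norm{T_n-T_m}\,\norm{x}$ (taking $u=|w_n-w_m|$ to recover the norm, which the paper does via a $\liminf$), and finish with the same $\varepsilon/3$ splitting of $|Tz_\alpha-w|\wedge u$. You go beyond the paper in two places. First, you actually prove the subspace (linearity) part of the statement, which the paper's proof omits; your reduction to the stability of relatively (sequentially) $un$-compact sets under sums and scalar multiples, using that $un$-convergence is a linear Hausdorff topology, is the right way to handle it. Second, and more substantively, the paper justifies the simultaneous extraction for nets by ``a standard diagonal argument''; as you correctly point out, diagonalization is a sequential device and does not yield a common subnet after countably many refinements of a net. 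Your replacement --- one subnet extraction in the compact product $\prod_n\overline{T_n[-x,x]}^{\,un}$ via Tychonoff, equivalently passing to a universal subnet of $(z_\alpha)$ --- is the correct and, as far as I can see, necessary repair of that step, so your write-up is in fact more rigorous than the paper's at the one delicate point of the argument.
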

\begin{proof}
Let $(T_n)$ be a sequence of $AM$-$un$-compact operators in $L(E,F)$ and $T_n\rightarrow T$. For any order bounded net $(x_\alpha)$, for every $n$, the net $(T_mx_\alpha)_\alpha$ has a $un$-convergent subnet. By a standard diagonal argument, we can find a common subnet for all these nets. Passing to this subnet, we may assume without loss of generality that for every $m$ we have $T_mx_\alpha\xrightarrow{un} y_m$ for some $y_m$. Note that
$$\Vert y_m-y_n\Vert\leq \lim\inf_\alpha\Vert T_mx_\alpha-T_nx_\alpha\Vert\leq\Vert T_m-T_n\Vert\rightarrow0,$$
so that the sequence $(y_m)$ is Cauchy and, therefore, $y_m\rightarrow y$ for some $y\in F$. Fix $u\in F_+$ and $\epsilon >0$. Find $m_0$ such that $\Vert T_{m0}-T\Vert<\epsilon$ and $\Vert y_{m0}-y\Vert<\epsilon$. Find $\alpha_0$ such that $\big\Vert|T_{m0}x_\alpha-y_{m0}|\wedge u\big\Vert<\epsilon$ whenever $\alpha\geq\alpha_0$. It follows from 
$$|Tx_\alpha-y|\wedge u\leq |Tx_\alpha-T_{m0}x_\alpha|+|T_{m0}x_\alpha-y_{m0}|\wedge u+|y_{m0}-y|$$
that $\big\Vert|Tx_\alpha-y|\wedge u\big\Vert<3\epsilon$, hence $Tx_\alpha\xrightarrow{un}y$.
\end{proof}
\begin{proposition}\label{}
Let $E$ be Riesz space and $F$ be Banach lattice, for two positive operators $T$ and $S$ satisfying $0\leq S\leq T$ and $T$ is lattice homomorphism, if $T$ is $AM$-($\sigma$)-$un$-compact, then $S$ is also. 	
\end{proposition}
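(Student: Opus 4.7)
The plan is to fix $x\in E_+$, take an arbitrary net $(z_\alpha)\subset[-x,x]$, and extract a subnet along which $(Sz_\alpha)$ is $un$-convergent in $F$. The opening move is to use the hypothesis that $T$ is $AM$-$un$-compact: since $(Tz_\alpha)$ lies in the relatively $un$-compact set $T[-x,x]$, I pass to a subnet (and relabel) along which $Tz_\alpha\xrightarrow{un}v$ for some $v\in F$, so that in particular $(Tz_\alpha)$ is $un$-Cauchy.

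The heart of the argument is a pointwise domination $|Sz_\alpha-Sz_\beta|\le|Tz_\alpha-Tz_\beta|$. I would derive it in three steps, using positivity of $S$, the inequality $S\le T$ applied to the positive vector $|z_\alpha-z_\beta|$, and the lattice-homomorphism identity $T|y|=|Ty|$:
\[
|Sz_\alpha-Sz_\beta|=|S(z_\alpha-z_\beta)|\le S|z_\alpha-z_\beta|\le T|z_\alpha-z_\beta|=|T(z_\alpha-z_\beta)|.
\]
For every $u\in F_+$ this yields $\bigl\||Sz_\alpha-Sz_\beta|\wedge u\bigr\|\le\bigl\||Tz_\alpha-Tz_\beta|\wedge u\bigr\|$, and the right-hand side is eventually arbitrarily small because $(Tz_\alpha)$ is $un$-Cauchy. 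Hence $(Sz_\alpha)$ is $un$-Cauchy as well.

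The step I expect to be the main obstacle is upgrading this $un$-Cauchy condition to a genuine $un$-limit in $F$, since a Banach lattice need not be $un$-complete. My plan is to bypass abstract completeness by exploiting the order-boundedness of the images: positivity gives $|Sz_\alpha|\le S|z_\alpha|\le Sx$, so $|Sz_\alpha-Sz_\beta|\le 2Sx$ for all $\alpha,\beta$. Inserting the specific test element $u=2Sx$ into the $un$-Cauchy estimate makes the lattice meet redundant, $|Sz_\alpha-Sz_\beta|\wedge 2Sx=|Sz_\alpha-Sz_\beta|$, so the $un$-condition degenerates into $\|Sz_\alpha-Sz_\beta\|\to 0$. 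Norm completeness of $F$ then delivers a norm limit $w\in F$, which is automatically an $un$-limit; thus $Sz_\alpha\xrightarrow{un}w$ and $S[-x,x]$ is relatively $un$-compact. The $\sigma$-version runs verbatim with sequences in place of nets, invoking $AM$-$\sigma$-$un$-compactness of $T$ at the opening step.
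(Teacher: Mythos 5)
Your proof is correct and follows essentially the same route as the paper's: pass to a subnet where $(Tz_\alpha)$ is $un$-convergent, dominate $|Sz_\alpha-Sz_\beta|$ by $|Tz_\alpha-Tz_\beta|$ via positivity, $S\le T$, and the lattice-homomorphism identity, and conclude that $(Sz_\alpha)$ is $un$-Cauchy and hence convergent. Your use of the test element $u=2Sx$ simply makes explicit the step the paper states as ``convergence and $un$-convergence are equivalent on order bounded sequences,'' so the two arguments coincide in substance.
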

\begin{proof}
We prove the sequential case, the net case is similar. Let $(x_n)$ be order bounded sequence in $E$, passing to a subsequence, we may assume that $(Tx_n)$ is $un$-convergent in $F$, so it is $un$-Cauchy. According that
$$|Sx_n-Sx_m|\wedge u\leq S|x_n-x_m|\wedge u\leq T|x_n-x_m|\wedge u=|Tx_n-Tx_m|\wedge u\rightarrow0$$
as $n,m\rightarrow\infty$. It follows that $(Sx_n)$ is $un$-Cauchy, since the convergence and $un$-convergence are equivalent on order bounded sequence and $F$ is norm complete, therefore $S$ is $AM$-$\sigma$-$un$-compact.	
\end{proof}
Using the same method, and by \cite[Theorem~7.2]{W:99}, the following result are also true.
\begin{corollary}\label{}
Let $E$ be Riesz space and $F$ be $KB$-space, for two positive operators $T$ and $S$ satisfying $0\leq S\leq T$ and $T$ is lattice homomorphism, if $T$ is $AM$-($\sigma$)-$uaw$-compact, then $S$ is also.	
\end{corollary}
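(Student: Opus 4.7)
The plan is to mirror the proof of the preceding proposition almost verbatim, replacing $un$-convergence by $uaw$-convergence throughout and handling the final Cauchy-to-convergent step via the cited result \cite[Theorem~7.2]{W:99}. I would treat the sequential case first; the net case is analogous.

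First, I would take an order bounded sequence $(x_n)$ in $E$. Since $T$ is $AM$-$\sigma$-$uaw$-compact, after passing to a subsequence one may assume that $(Tx_n)$ is $uaw$-convergent in $F$, hence $uaw$-Cauchy. Using $0\leq S\leq T$ together with the fact that $T$ is a lattice homomorphism, I would then derive, for every $u\in F_+$, the sandwich estimate
$$|Sx_n-Sx_m|\wedge u\leq S|x_n-x_m|\wedge u\leq T|x_n-x_m|\wedge u=|Tx_n-Tx_m|\wedge u.$$
Applying any $f\in F^{\prime}_+$ to both sides and using that $(Tx_n)$ is $uaw$-Cauchy gives $f(|Sx_n-Sx_m|\wedge u)\to 0$, so $(Sx_n)$ is $uaw$-Cauchy in $F$.

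To close, I would invoke \cite[Theorem~7.2]{W:99}, the $KB$-space analogue of the completeness step used in the $un$ case, to promote the $uaw$-Cauchy order bounded sequence $(Sx_n)$ to a $uaw$-convergent one in $F$. This produces the required $uaw$-limit and proves that $S$ is $AM$-$\sigma$-$uaw$-compact.

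The main obstacle is precisely this last step. In the $un$ version the argument closes easily, because on order bounded sets $un$-convergence coincides with norm convergence and $F$ is norm complete. For $uaw$ this coincidence fails: the $uaw$-topology is weaker than the weak topology and is not automatically sequentially complete even on order intervals. Consequently one genuinely needs the $KB$-space hypothesis (and the weak sequential completeness it entails) via \cite[Theorem~7.2]{W:99} to pass from $uaw$-Cauchy to $uaw$-convergent; everything else in the proof is formally identical to the lattice-homomorphism domination argument of the preceding proposition.
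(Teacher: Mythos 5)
Your proposal is correct and follows essentially the same route as the paper, which itself only says ``using the same method'' as the preceding domination proposition together with the cited Theorem~7.2 of Wnuk to close the $uaw$-Cauchy-to-convergent step in the $KB$-space $F$. You have in fact spelled out more detail than the paper does, and you correctly identify that the only non-formal point is that last completeness step, which is exactly where the paper invokes the same reference.
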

We now turn our discussion to lattice properties of $AM$-$un$-compact operators. We may ask whether or not a $AM$-$un$-compact operators between two Banach lattices possesses a modulus. Obviously, compact operator is $AM$-$un$-compact. According to the example of U.Krengel in \cite{K:66}, a $AM$-$un$-compact operator need not have a modulus and also that a $AM$-$un$-compact operator may have a modulus which is not $AM$-$un$-compact.

So we consider the order bounded $AM$-$un(uaw)$-compact operaotrs.
We mentioned above that order bounded $AM$-$un$-compact operator is $AM$-compact.
By \cite[Proposition~3.7.2]{MN:91}, we have the following result.
\begin{proposition}\label{}
Let $E$ be Banach lattice, $F$ be order continuous Banach lattice, then the collection of all order bounded $AM$-($\sigma$)-$un$-compact operators from $E$ into $F$ forms a band in $L^b(E,F)$.	
\end{proposition}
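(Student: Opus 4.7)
My plan is to reduce this to the classical fact that the $AM$-compact operators form a band in $L^b(E,F)$ when $F$ is order continuous, which is precisely \cite[Proposition~3.7.2]{MN:91}, the reference the paper already invokes. The heart of the reduction is a single observation: on an order interval of $F$, the $un$-topology agrees with the norm topology. Indeed, if $(y_\beta)\subseteq[-u,u]$ with $u\in F_+$ and $y_\beta\xrightarrow{un}y$ in $F$, then $|y_\beta-y|\le|y_\beta|+|y|\le u+|y|=:v\in F_+$, so $|y_\beta-y|\wedge v=|y_\beta-y|$, and the defining $un$-condition $\Vert|y_\beta-y|\wedge v\Vert\to0$ becomes $\Vert y_\beta-y\Vert\to0$. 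This uses only that $T$ (and hence the image $T[-x,x]$) is order bounded.

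Given this, for any $T\in L^b(E,F)$ and $x\in E_+$, the set $T[-x,x]$ is relatively $un$-compact in $F$ if and only if it is relatively norm compact, and the analogous equivalence holds for sequences. Thus within $L^b(E,F)$ the order bounded $AM$-($\sigma$)-$un$-compact operators coincide with the $AM$-($\sigma$)-compact operators, and the claim reduces to showing that the latter class forms a band.

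To finish, I would verify the three band axioms by appealing to \cite[Proposition~3.7.2]{MN:91}. Linearity is immediate from $(aS+bT)[-x,x]\subseteq a\,S[-x,x]+b\,T[-x,x]$ together with the stability of relative norm compactness under finite sums. The solidity of the $AM$-compact operators in $L^b(E,F)$ (that is, $|S|\le|T|$ with $T$ $AM$-compact forces $S$ to be $AM$-compact) and the order closedness ($0\le T_\alpha\uparrow T$ with each $T_\alpha$ $AM$-compact forces $T$ to be $AM$-compact) are the two nontrivial contents of the cited proposition; note that $L^b(E,F)$ is Dedekind complete because $F$ is order continuous, so ``band'' makes sense in it.

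The chief obstacle is the solidity step, since $AM$-compactness is not manifestly preserved under domination in modulus: the pointwise inequality $|Sy|\le|S||y|$ does not transparently carry compactness of $|T|[-x,x]$ down to $S[-x,x]$. This is where the order continuity of $F$ is truly essential, as it allows the classical argument (via the Riesz--Kantorovich formulas for $|S|$ on order intervals, combined with the solid-hull characterisation of norm compactness in order continuous Banach lattices) to push through. Since the paper is content to treat this as a black box, my proposal is to present the reduction to $AM$-compactness carefully and then simply invoke the cited proposition for the band property itself.
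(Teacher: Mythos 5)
Your proposal is correct and follows essentially the same route as the paper, which likewise reduces the statement to the $AM$-compact case (using that $un$-convergence coincides with norm convergence on order bounded sets, so order bounded $AM$-$un$-compact operators are exactly the order bounded $AM$-compact ones) and then cites \cite[Proposition~3.7.2]{MN:91}. Your write-up merely supplies the details of that reduction, which the paper leaves implicit.
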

When $F$ is order continuous, $o$-$uaw$-compact operator is $AM$-$un$-compact, hence we have the following corollary.
\begin{corollary}\label{}
Let $E$ be Banach lattice, $F$ be order continuous Banach lattice, then the collection of all order bounded $o$-($\sigma$)-$uaw$-compact operators from $E$ into $F$ forms a band in $L^b(E,F)$.	
\end{corollary}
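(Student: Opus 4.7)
The plan is to reduce the corollary directly to the preceding Proposition by establishing that, under the order continuity hypothesis on $F$, the class of order bounded $o$-($\sigma$)-$uaw$-compact operators from $E$ into $F$ coincides \emph{as a set} with the class of order bounded $AM$-($\sigma$)-$un$-compact operators. Once this set-theoretic equality is in place, the preceding Proposition (which asserts that the latter forms a band in $L^b(E,F)$) immediately transports to give the stated corollary.

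To justify the coincidence, I would argue in two directions. In one direction, $un$-convergence always implies $uaw$-convergence (norm convergence implies weak convergence at each truncation $|\cdot|\wedge u$), so every $AM$-($\sigma$)-$un$-compact operator is $o$-($\sigma$)-$uaw$-compact without any hypothesis on $F$; this is exactly the remark made in Section~\ref{} right after the definitions. In the other direction, Proposition~\ref{un-uaw compact} shows that when $F$ has order continuous norm, every $o$-($\sigma$)-$uaw$-compact operator $T\colon E\to F$ is automatically $AM$-($\sigma$)-$un$-compact, since on an order continuous Banach lattice $un$- and $uaw$-convergence agree on bounded sets. Together these two inclusions give the required equality of operator classes.

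With the equality established, the collection of order bounded $o$-($\sigma$)-$uaw$-compact operators from $E$ into $F$ is \emph{literally} the same subset of $L^b(E,F)$ as the collection of order bounded $AM$-($\sigma$)-$un$-compact operators, and hence inherits its band structure inside $L^b(E,F)$ from the preceding Proposition. There is no real obstacle here: the entire content is pushed into Proposition~\ref{un-uaw compact} together with the band result for $AM$-$un$-compact operators, and the only thing to check is that passing to the order bounded subclass preserves the equivalence, which is obvious since we intersect both classes with $L^b(E,F)$.
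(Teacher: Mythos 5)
Your proposal is correct and follows essentially the same route as the paper: the paper likewise derives the corollary by noting that when $F$ has order continuous norm the $o$-($\sigma$)-$uaw$-compact operators coincide with the $AM$-($\sigma$)-$un$-compact ones (via the earlier proposition on $un$/$uaw$ equivalence), and then invokes the band result for the latter class. Your version merely spells out both inclusions explicitly, which the paper leaves implicit.
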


Next, we study the topology of $AM$-$un$-compact operators and $o$-$uaw$-compact operators.

Recall that a operator $T:E\rightarrow F$ between two Riesz space $E$ and $F$ is positive if $T(x)\geq 0$ in $F$ whenever $x\geq 0$ in
$E$. An operator $T : E\rightarrow F$ is regular if $T = T_1-T_2$ where $T_1$ and $T_2$ are positive
operators from $E$ into $F$. 
As usual, $L^r(E,F)$ denotes the space of all regular operators from
$E$ into $F$. With respect to the uniform operator norm the space $L^r$ is not Banach in
general, but there exists a natural norm on $L^r$, is defined by
$$\Vert T\Vert_r:=\big\Vert|T|\big\Vert:=\sup\{\big\Vert|T|x\big\Vert:\Vert x\Vert\leq1\}.$$
The regular norm (r-norm, for short) $\Vert\cdot\Vert_r$, which turns
$L^r(E,F)$ into a Banach lattice.
If $F$ is Dedekind complete, it is clear that r-norm is a lattice norm on $L^b(E,F)=L^r(E,F)$ and $L^b(E,F)$ is a Banach lattice under the r-norm \cite[Theorem~4.74]{AB:06}.

The collection of all $AM$ ($\sigma$)-unbounded norm compact operators of $L(E, F)$ will be denoted by $K_{AM-(\sigma)-un}(E, F)$. That is,
$$K_{AM-(\sigma)-un}(E, F)=\{T \in L(E, F) : T \text { is AM-($\sigma$)-un compact}\}$$
respectively,
$$K^r_{AM-(\sigma)-un}(E, F)=\{T\in K_{AM-(\sigma)-un}(E, F):\pm T\leq S\in K^+_{AM-(\sigma)-un}(E, F)\}$$

A Banach lattice $E$ is said to be $AL$-space if $\Vert x+y\Vert=\Vert x\Vert+\Vert y\Vert$ holds for all $x,y\in E_+$. A Banach lattice $E$ is said to be $AM$-space if $\Vert x+y\Vert=\max\{\Vert x\Vert,\Vert y\Vert\}$ holds for all $x,y\in E_+$. 

Now, we research that when $AM$-$un$-compact operators form a Banach lattice. 
\begin{proposition}\label{}
Let $T:E\rightarrow F$ is a $AM$-$un$-compact operator from Banach lattice $E$ into Banach lattice $F$ which has strong order unit, then the modulus of $T$ is exsits and is a $AM$-$un$-compact operator.	
\end{proposition}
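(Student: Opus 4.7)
The plan is to exploit the strong order unit of $F$ to reduce $AM$-$un$-compactness to classical $AM$-compactness, and then establish existence and $AM$-compactness of the modulus by working in the $C(K)$-representation of $F$.

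First I would verify that in $F$ with strong order unit $e$, the $un$-topology coincides with the norm topology. One direction, norm $\Rightarrow un$, follows from $\bignorm{|f|\wedge u}\le\norm{f}$. For the converse, by the Kakutani-Bohnenblust-Krein theorem $F$ is (up to equivalent norm) a $C(K)$-space with $e\leftrightarrow 1$, so choosing $u=e$ in the definition of $un$-null gives $\bignorm{|f|\wedge e}\to 0$, which is the same as $\min(\norm{f}_\infty,1)\to 0$, forcing $\norm{f}\to 0$. Consequently $un$-compact and norm compact subsets of $F$ coincide, so $T$ is $AM$-$un$-compact if and only if $T$ is $AM$-compact in the classical sense; moreover $T$ is automatically order bounded, since norm-bounded subsets of $F$ lie in a multiple of $[-e,e]$.

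Next I would establish existence and $AM$-compactness of $|T|$ via the $C(K)$-picture. Norm compactness of $T[-x,x]$ in $C(K)$ is, by Arzela-Ascoli, equivalent to uniform boundedness and equicontinuity of the family $\{Ty:|y|\le x\}$ on $K$. The Riesz-Kantorovich pointwise formula $T^+x(k)=\sup\{(Ty)(k):0\le y\le x\}$ defines a continuous function on $K$, since for $k$ near $k_0$ one has $|T^+x(k)-T^+x(k_0)|\le \sup_{0\le y\le x}|(Ty)(k)-(Ty)(k_0)|$, which is small by equicontinuity; analogously for $T^-$. This yields $|T|=T^++T^-$ as an operator $E\to F$. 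The same type of estimate, applied to the family $\{|T|y:|y|\le x\}$ (controlled by $4\sup_{z\in[0,x]}|(Tz)(k)-(Tz)(k_0)|$ via the decomposition $y=y^+-y^-$ with $y^\pm\in[0,x]$), gives equicontinuity of that family, so by Arzela-Ascoli $|T|[-x,x]$ is relatively norm compact; hence $|T|$ is $AM$-compact, and by the first step $AM$-$un$-compact.

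I expect the main obstacle to be the careful verification in the second step that equicontinuity of $\{Ty:|y|\le x\}$ on $K$ propagates through the Riesz-Kantorovich suprema. Both existence of $|T|$ as an operator into $C(K)$ and the equicontinuity of the image of $[-x,x]$ under $|T|$ reduce to this propagation; the factor-of-$4$ estimate above sketches the argument, but writing it out with full rigor (particularly handling the decomposition $y=y^+-y^-$ cleanly and verifying that the pointwise suprema stay inside $C(K)$ and not merely $F''$) is where the technical work lies.
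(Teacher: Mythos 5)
Your proposal is correct and follows essentially the same route as the paper: use the strong order unit to identify $F$ with $C(K)$ and $un$-compactness with norm compactness, then obtain existence and $AM$-compactness of $|T|$ from the pointwise Riesz--Kantorovich formula. The only real difference is that you re-derive via Arzel\`a--Ascoli what the paper simply cites, namely Theorems~4.30 and~5.7 of Aliprantis--Burkinshaw.
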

\begin{proof}
If $F$ has strong order unit, then $F$ lattice and norm isomorphic to $C(K)$ for some compact Hausdorff space $K$ by \cite[Theorem~4.21 and Theorem~4.29]{AB:06}, we identify $C(K)$ with $F$. Since 
$$|T|(x)=\sup T[-x,x]$$
for $x\in E_+$ and $T$ is $AM$-$un$-compact, by \cite[Theorem~2.3]{KMT:16}, $T[-x,x]$ is relatively compact subset of $F$. According to \cite[Theorem~4.30]{AB:06}, the modulus of $T$ exists. And by the proof of the second part of \cite[Theorem~5.7]{AB:06}, $|T|$ is compact, hence it is $AM$-$un$-compact.	
\end{proof}

The space $K^r(E, F)$ is not a Banach space under the regular norm. But there is a so-called k-norm on $K^r(E, F):\Vert T\Vert_k = inf\{\Vert S\Vert :	S \in K^+(E, F), S \geq \pm T\}$ under which it is Banach space  (see \cite{CW:97,W:95} for details). It is natural to consider the corresponding problem for $AM$-$un$-compact operators.

We introdunce a new norm on $AM$-$un$-compact operators.
\begin{definition}\label{}
If $E$ and $F$ are Banach lattices then the $AM$-$un$-norm on $K_{AM-un}(E,F)$ is
defined by	
$$\Vert T\Vert_{AM-un}=\inf\{\Vert S\Vert:S\in K_{AM-un}^+(E,F),S\geq \pm T\}.$$
It is clear that this does define a lattice norm and that $\Vert T\Vert_{AM-un}\geq \Vert T\Vert_r$.
\end{definition}

\begin{theorem}\label{}
For Banach lattices $E$ and $F$, $K^r_{AM-un}(E, F)$ is complete under the $AM$-$un$-norm.	
\end{theorem}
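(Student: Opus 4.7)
The plan is to adapt the standard "k-norm completeness" argument of Chen--Wickstead (used for regular compact operators) to our setting, relying crucially on the fact, established in the first proposition of Section~3, that $K_{AM\text{-}un}(E,F)$ is closed in $L(E,F)$ under the operator norm. First I would observe that whenever $S \geq \pm T$ one has $|Tx|\leq S|x|$ for $x\in E_+$, and hence $\norm{T}\leq 2\norm{S}$; taking the infimum gives $\norm{T}\leq 2\norm{T}_{AM\text{-}un}$, so any Cauchy sequence in $(K^r_{AM\text{-}un}(E,F),\norm{\cdot}_{AM\text{-}un})$ is automatically Cauchy for the operator norm.

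Let $(T_n)$ be $\norm{\cdot}_{AM\text{-}un}$-Cauchy. Passing to a subsequence I may assume $\norm{T_{n+1}-T_n}_{AM\text{-}un}<2^{-n}$, and for each $n$ choose $S_n\in K^+_{AM\text{-}un}(E,F)$ with $\pm(T_{n+1}-T_n)\leq S_n$ and $\norm{S_n}<2^{-n}$. Then $\sum S_n$ is absolutely summable in operator norm, so the partial sums $\Sigma_N=\sum_{n=1}^{N}S_n$ form an increasing sequence of positive $AM\text{-}un$-compact operators converging in operator norm to some $S\in L(E,F)$; because $K_{AM\text{-}un}(E,F)$ is norm-closed (and clearly closed under sums and under taking norm limits of positive operators), $S\in K^+_{AM\text{-}un}(E,F)$. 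Similarly, the telescoping series shows $T_n\to T$ in operator norm for some $T\in L(E,F)$, and the norm-closure again gives $T\in K_{AM\text{-}un}(E,F)$.

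Next I would verify $T\in K^r_{AM\text{-}un}(E,F)$: since $T_1\in K^r_{AM\text{-}un}(E,F)$, pick $R\in K^+_{AM\text{-}un}(E,F)$ with $\pm T_1\leq R$; then from $T=T_1+\sum_{n\geq 1}(T_{n+1}-T_n)$ one obtains $\pm T\leq R+S$, and $R+S\in K^+_{AM\text{-}un}(E,F)$. The same telescoping gives, for each $m$,
\[
\pm(T-T_m)\leq \sum_{n\geq m}S_n,
\]
so $\norm{T-T_m}_{AM\text{-}un}\leq \sum_{n\geq m}\norm{S_n}<2^{-m+1}\to 0$. Since the original sequence is Cauchy and has a subsequence converging to $T$ in $\norm{\cdot}_{AM\text{-}un}$, the whole sequence converges to $T$.

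The main obstacle is the step that guarantees the limit object stays inside our class: one must know that countable sums (in operator norm) of positive $AM\text{-}un$-compact operators are again $AM\text{-}un$-compact. This is exactly the content of the closedness-of-$K_{AM\text{-}un}$ proposition proved earlier, so once that is invoked the rest is a routine telescoping argument; the only slightly delicate point is passing the domination relation $\pm T\leq R+S$ through the operator-norm limit, which works because the positive cone of $L(E,F)$ is norm-closed.
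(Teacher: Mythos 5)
Your proposal is correct and follows essentially the same telescoping argument as the paper: pass to a subsequence with $\norm{T_{n+1}-T_n}_{AM\text{-}un}<2^{-n}$, dominate the differences by positive $AM$-$un$-compact operators $S_n$ with $\norm{S_n}<2^{-n}$, and use the tails $\sum_{k\geq n}S_k$ both to certify that the operator-norm limit $T$ lies in $K^r_{AM\text{-}un}(E,F)$ and to control $\norm{T-T_n}_{AM\text{-}un}$. You are in fact somewhat more careful than the paper in spelling out why the tail sums remain $AM$-$un$-compact (via the norm-closedness proposition) and why the domination relations survive the passage to the limit.
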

\begin{proof}
Let $(T_n)$ be Cauchy-sequence in $K^r_{AM-un}(E,F)$. For any $n\in N$, we have
$\Vert T_n-T\Vert_{AM-un}<2^{-n}.$
Since $\Vert T\Vert\leq\Vert T\Vert_{AM-un}$, so $(T_n)$ is Cauchy-sequence under operator norm. From the definition of $AM$-$un$-norm, there exsits $(V_n)\subset K^r_{AM-un}(E,F)$ statisfying $\pm(T_n-T_{n+1})\leq V_n$, $\Vert V_n\Vert\leq2^{-n}$ hold for all $n\in N$. It is clear that $Q_n=\sum_{k=n}^{\infty}V_k$ is positive $AM$-$un$-compact operator and $\Vert Q_n\Vert\leq2^{-n+1}$. For any $x\in E_+$, we have $|(T_n-T)x|=\lim\limits_{m\rightarrow\infty}|(T_n-T_m)x|\leq \lim\limits_{m\rightarrow\infty}\sum_{k=n}^{m-1}|(T_k-T_{k+1})x|\leq Q_nx$, it follows $\pm(T_n-T)\leq Q_n$, hence $T\in K^r_{AM-un}(E,F)$. And Since $\Vert T_n-T\Vert_{AM-un}\leq \Vert Q_n\Vert<2^{-n+1}$, therefore $T$ is the limit of $(T_n)$ and $K^r_{AM-un}(E, F)$ is complete under the $AM$-$un$-norm.
\end{proof}

We do not know whether the $AM$-$un$-norm is equivalent to operator norm and r-norm.

Suppose that $T_n,T\in K^r_{AM-un}(E,F)$ and $\Vert T_n-T\Vert_{AM-un}\rightarrow0$, $|T_n|$ is exsits for every $n\in N$, whether the modulus of $T$ exsits?
\begin{theorem}\label{}
For Banach lattice $E$ and $F$ be Banach lattices, $T_n,T\in K^r_{AM-un}(E,F)$ and $\Vert T_n-T\Vert_{AM-un}\rightarrow0$, $|T_n|$ is exsits for every $n\in N$, then the modulus of $T$ exsits, moreover
$$\big\Vert |T_n|-|T|\big\Vert_{AM-un}\rightarrow0$$	
\end{theorem}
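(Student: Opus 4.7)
The plan is to exploit the completeness of $K^r_{AM-un}(E,F)$ under the $AM$-$un$-norm (the previous theorem) by first showing that $(|T_n|)$ is Cauchy, extracting its limit $S$, and then verifying that $S$ is the modulus of $T$, from which the norm convergence is immediate.

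\textbf{Cauchy step.} The key observation is that whenever $U\in K^+_{AM-un}(E,F)$ satisfies $\pm(T_n-T_m)\le U$, one automatically has $\pm(|T_n|-|T_m|)\le U$. Indeed, $T_n\le T_m+U\le |T_m|+U$ and $-T_n\le -T_m+U\le |T_m|+U$, so $|T_m|+U$ dominates both $T_n$ and $-T_n$; since $|T_n|$ is the least such upper bound, $|T_n|\le |T_m|+U$, and by symmetry $|T_m|\le |T_n|+U$. Taking the infimum in the definition of $\|\cdot\|_{AM-un}$ gives $\||T_n|-|T_m|\|_{AM-un}\le \|T_n-T_m\|_{AM-un}\to 0$. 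By the preceding completeness theorem, $|T_n|\to S$ in the $AM$-$un$-norm for some $S\in K^r_{AM-un}(E,F)$. Since $\|\cdot\|\le \|\cdot\|_{AM-un}$, this convergence also takes place in operator norm, so $|T_n|x\to Sx$ in $F$ for every $x\in E_+$; hence $S\ge 0$ and $\pm T\le S$ follow by passing to the limit in $\pm T_n\le |T_n|$ and using that the positive cone of $F$ is norm-closed.

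\textbf{Least-upper-bound step.} To conclude $S=|T|$, I will check that $S\le R$ for every operator $R$ with $\pm T\le R$ (any such $R$ is automatically positive, being $\tfrac12((R+T)+(R-T))$). Fix $\epsilon>0$, and pick $n$ together with $U_n\in K^+_{AM-un}(E,F)$ satisfying $\pm(T-T_n)\le U_n$ and $\|U_n\|<\epsilon$. Then $T_n=T+(T_n-T)\le R+U_n$ and similarly $-T_n=-T+(T-T_n)\le R+U_n$, so $R+U_n$ dominates $\{T_n,-T_n\}$, and consequently $|T_n|\le R+U_n$ by the least-upper-bound property of $|T_n|$. Evaluating at any $x\in E_+$ and letting $n\to\infty$ (using $|T_n|x\to Sx$ in norm and $\|U_nx\|\le\|U_n\|\|x\|\to 0$), closedness of the positive cone of $F$ yields $Sx\le Rx$. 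Hence $S\le R$ as operators, so $|T|=S$ exists, and the convergence $\||T_n|-|T|\|_{AM-un}\to 0$ is precisely the conclusion of the Cauchy step.

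\textbf{Main obstacle.} The delicate point is the least-upper-bound step, since $F$ is not assumed Dedekind complete: the modulus $|T-T_n|$ need not exist, so one cannot simply bound $||T_n|-|T||\le |T-T_n|$ or invoke standard Riesz-space calculations with $|T-T_n|$. The workaround is to replace $|T-T_n|$ by the $AM$-$un$-dominator $U_n$ coming from the definition of $\|\cdot\|_{AM-un}$, and to use the minimality of each existing $|T_n|$ twice: once to obtain $\pm(|T_n|-|T_m|)\le U$ (which propagates Cauchy-ness from $T_n$ to $|T_n|$), and once to obtain $|T_n|\le R+U_n$ (which makes the limit $S$ lie below every dominator of $\pm T$).
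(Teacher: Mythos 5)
Your proposal is correct and follows essentially the same route as the paper's proof: the sandwich estimate $\pm(|T_n|-|T_m|)\le U$ whenever $\pm(T_n-T_m)\le U$ to transfer the Cauchy property to the moduli, completeness to extract $S$, and then the two-sided verification that $S$ is the least upper bound of $\{T,-T\}$ via $|T_n|\le R+U_n$. Your version is marginally more careful in that you test minimality against an arbitrary dominator $R$ (not only against positive $AM$-$un$-compact ones) and justify the limit inequalities via norm-closedness of the positive cone of $F$, but the underlying argument is the same.
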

\begin{proof}
Since $\Vert T_n-T\Vert_{AM-un}\rightarrow0$, so ${T_n}$ is $\Vert\cdot\Vert_{AM-un}$-Cauchy sequence and there exsits $H_{nm}\in K^r_{AM-un}(E,F)$ satisfying that $\pm(T_n-T_m)\leq H_{nm}$ and $\Vert H_{nm}\Vert\rightarrow0$. And since
$$\pm T_n=\pm(T_n-T_m)\mp T_m\leq H_{nm}+|T_m|,$$
so $|T_n|\leq H_{nm}+|T_m|$, similarly, $|T_m|\leq H_{nm}+|T_n|$, hence $\pm(|T_n|-|T_m|)\leq H_{nm}$ and
$$\big\Vert|T_n|-|T_m|\big\Vert_{AM-un}\leq \Vert H_{nm}\Vert\rightarrow0$$, i.e. ${|T_n|}$ is $\Vert\cdot\Vert_{AM-un}$-Cauchy sequence.

Let $|T_n|\rightarrow S$, we have to show that $S=|T|$. In fact, if $V_n\in K^+_{AM-un}(E,F)$ satisfying $\pm(|T_n|-S)\leq V_n$ and $\Vert V_n\Vert\rightarrow0$, then we have $\pm T_n\leq |T_n|\leq S+V_n$, so $\pm T\leq S$.

In orther hand, if $U\in K^+_{AM-un}(E,F)$, $U\geq \pm T$, then there exsits $H_n\in K^+_{AM-un}(E,F)$ such that $\pm(T_n-T)\leq H_n$ and $\Vert H_n\Vert\rightarrow0$. According to $\pm T_n=\pm(T_n-T)\mp T\leq H_n+U$, we have $|T_n|\leq H_n+U$, hence $S\leq U$, moreover $S=|T|$, therefore $\big\Vert |T_n|-|T|\big\Vert_{AM-un}\rightarrow0$.
\end{proof}

\begin{corollary}\label{}
Let $E$ be an arbitrary Banach lattice, $F$ be $AM$-space or order continuous, then $K^r_{AM-un}(E,F)$ is a Banach lattice under the $AM$-$un$-norm.	
\end{corollary}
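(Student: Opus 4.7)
The plan begins by noting that two of the three ingredients of ``Banach lattice'' are already in hand: the $AM$-$un$-norm is a lattice norm by construction (observed right after its definition), and completeness under $\Vert\cdot\Vert_{AM-un}$ is the statement of the completeness theorem proved just above. The whole task therefore reduces to showing that $K^r_{AM-un}(E,F)$ is a Riesz space, i.e.\ that $|T|$ exists and lies in $K^r_{AM-un}(E,F)$ for every $T\in K^r_{AM-un}(E,F)$.

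For the order continuous case I would apply the earlier proposition which says that the order bounded $AM$-$un$-compact operators from $E$ into $F$ form a band in $L^b(E,F)$. Since $F$ is order continuous it is Dedekind complete, so $L^b(E,F)$ is a Dedekind complete Banach lattice under the $r$-norm and any band inside it is automatically a Dedekind complete Riesz subspace; in particular the modulus $|T|$ computed in $L^b(E,F)$ already sits in the band. It then remains to identify this band with $K^r_{AM-un}(E,F)$: given $T\in K^r_{AM-un}(E,F)$ with $\pm T\leq S\in K^+_{AM-un}(E,F)$, $T$ belongs to the band, and conversely the band component $|T|$ of any order bounded $AM$-$un$-compact operator is a positive $AM$-$un$-compact operator dominating $\pm T$, putting $T$ back into $K^r_{AM-un}(E,F)$.

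For the $AM$-space case the plan is to couple the earlier proposition on strong order units (which furnishes the modulus and its $AM$-$un$-compactness whenever $F$ has a strong order unit) with the theorem above on the stability of modulus-existence under $\Vert\cdot\Vert_{AM-un}$-limits. Concretely, I would try to approximate $T\in K^r_{AM-un}(E,F)$ in the $AM$-$un$-norm by operators $T_n$ whose range is contained in a principal ideal $F_{y_n}$ of $F$. Each $F_{y_n}$ is itself an $AM$-space with strong order unit, hence lattice and norm isomorphic to some $C(K_n)$, so the strong-order-unit proposition produces $|T_n|\in K^r_{AM-un}(E,F_{y_n})\subseteq K^r_{AM-un}(E,F)$. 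Taking the $AM$-$un$-norm limit and invoking the preceding theorem would then yield $|T|\in K^r_{AM-un}(E,F)$.

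The main obstacle is manufacturing these approximants in $\Vert\cdot\Vert_{AM-un}$ in the $AM$-space case. Since the $AM$-$un$-norm can be strictly stronger than the operator norm, the naive truncation $T_n := (\cdot\wedge y_n)\circ T$ need not converge to $T$ in $\Vert\cdot\Vert_{AM-un}$ even if it converges pointwise or in operator norm. The workaround I would pursue exploits the $AM$-space identity $\Vert y\vee z\Vert=\max\{\Vert y\Vert,\Vert z\Vert\}$ on the positive cone, together with the $un$-compactness of $T[-x,x]$ for each $x\in E_+$ (which lets one replace an uncountable sup over an order interval by a countable one). This is the step where I expect the substantive work to live; once the approximants are in place, the remainder of the proof is exactly the template set up in the first paragraph.
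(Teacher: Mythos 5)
The paper gives no proof of this corollary, so the benchmark is whether your argument actually closes it from the preceding results. Your reduction (lattice norm by construction, completeness from the preceding theorem, so only the Riesz-space structure is at stake) is right, and your treatment of the order continuous case is correct and is clearly what the authors intend: $F$ order continuous implies $F$ Dedekind complete, the order bounded $AM$-$un$-compact operators form a band in $L^b(E,F)$ by the earlier proposition, bands are ideals and hence closed under the modulus computed in $L^b(E,F)$, and any $T$ with $\pm T\leq S\in K^+_{AM-un}(E,F)$ lies in that band; so $|T|$ exists, is $AM$-$un$-compact, and witnesses $T\in K^r_{AM-un}(E,F)$.

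The $AM$-space half, however, contains a genuine gap that you yourself flag but do not close: you never construct the approximants $T_n$ with range in principal ideals $F_{y_n}$ converging to $T$ in $\Vert\cdot\Vert_{AM-un}$, and there is no reason to expect such approximants to exist for a general $AM$-space without unit (e.g.\ $F=C_0(\Omega)$ for $\Omega$ locally compact, non-compact and uncountable, which is neither order continuous nor possessed of a strong order unit). Density in the $AM$-$un$-norm is strictly harder than density in the operator norm, and the ``countable sup via $un$-compactness'' idea is only a hope, not an argument. The route the paper's own machinery supports is much shorter, at the cost of reading ``$AM$-space'' as ``$AM$-space with unit'': such an $F$ has a strong order unit, so the earlier proposition applies directly to \emph{every} $T\in K_{AM-un}(E,F)$ (no approximation needed), giving $|T|$ existent and $AM$-$un$-compact, whence $K^r_{AM-un}(E,F)=K_{AM-un}(E,F)$ is a Riesz space and the completeness theorem finishes. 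As stated for arbitrary $AM$-spaces, the corollary is not established either by your sketch or by the paper's preceding results; you should either restrict to $AM$-spaces with unit or supply the missing approximation (or a direct existence proof of $\sup T[-x,x]$ in $F$) before claiming the general case.
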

If $f\in E^\thicksim$ and $u\in F$, then the symbol $f\otimes u$ will denote the order bounded operator defined by $(f\otimes u)(x):=f(x)u$. For a compact Hausdorff space $K$, $K$ is called \emph{extremely disconnected} or \emph{Stonian} if the closure of every open set is open. Then, we study that when $AM$-$un$-compact operators form $AM$-space.
\begin{theorem}\label{}
Let $E$ and $F$ be Banach lattices, then $K^r_{AM-un}(E,F)$ is $AM$-space under the $AM$-$un$-norm if and only if $E$ is $AL$-space and $F$ is $AM$-space.	
\end{theorem}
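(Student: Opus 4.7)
My plan is to treat the two directions separately: necessity by testing against rank-one operators, and sufficiency by an adjoint/duality computation that transfers the problem to the $AM$-structure on $E'$.

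For the \emph{only if} direction, I would use rank-one operators. For any $f \in E'_+$ and $u \in F_+$, the operator $f \otimes u$ is compact (hence $AM$-$un$-compact) and positive, so it lies in $K^+_{AM-un}(E,F)$; moreover $\|f \otimes u\|_{AM-un} = \|f \otimes u\| = \|f\|\,\|u\|$, since for any positive $T$ the infimum defining $\|T\|_{AM-un}$ is attained at $T$ itself (any $U \ge T \ge 0$ satisfies $\|U\| \ge \|T\|$). Two direct Riesz-Kantorovich computations would yield
\begin{equation*}
(f \otimes u_1) \vee (f \otimes u_2) = f \otimes (u_1 \vee u_2), \qquad (f_1 \otimes u) \vee (f_2 \otimes u) = (f_1 \vee f_2) \otimes u,
\end{equation*}
for $u_1, u_2 \in F_+$ and $f_1, f_2 \in E'_+$. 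I would then substitute into the assumed $AM$-norm identity on $K^r_{AM-un}(E,F)$ and cancel a nonzero $\|f\|$, respectively $\|u\|$, to obtain $\|u_1 \vee u_2\|_F = \max\{\|u_1\|,\|u_2\|\}$ and $\|f_1 \vee f_2\|_{E'} = \max\{\|f_1\|,\|f_2\|\}$, so that $F$ is $AM$ and $E'$ is $AM$; the latter is equivalent to $E$ being $AL$.

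For the \emph{if} direction, assume $E$ is $AL$ and $F$ is $AM$. By the preceding corollary $K^r_{AM-un}(E,F)$ is already a Banach lattice under $\|\cdot\|_{AM-un}$, and the same easy argument as above gives $\|T\|_{AM-un} = \|T\|$ for positive $T$, so it will be enough to verify the operator-norm identity $\|S \vee T\| = \max\{\|S\|,\|T\|\}$ for $S, T \ge 0$ in $K^r_{AM-un}(E,F)$. The plan is to pass to the adjoint: a direct Riesz-Kantorovich computation (both sides applied to $x \in E_+$ produce $\sup\{(S'\phi)(y) + (T'\phi)(z) : y + z = x,\ y,z \in E_+\}$) gives
\begin{equation*}
(S \vee T)'\phi = S'\phi \vee T'\phi \quad\text{in } E' \text{ for every } \phi \in F'_+.
\end{equation*}
Using that the norm of a positive operator is realized on the positive part of the dual unit ball, and that $E'$ is an $AM$-space dually to $E$ being $AL$, I would then compute
\begin{align*}
\|S \vee T\| = \|(S \vee T)'\| &= \sup_{\phi \in F'_+,\, \|\phi\|\le 1}\|S'\phi \vee T'\phi\|_{E'} \\
&= \sup_\phi \max\{\|S'\phi\|,\|T'\phi\|\} = \max\{\|S'\|,\|T'\|\} = \max\{\|S\|,\|T\|\},
\end{align*}
the reverse inequality being trivial from $S \vee T \ge S, T$.

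The principal technical hurdle will be the adjoint-join identity $(S \vee T)'\phi = S'\phi \vee T'\phi$: the $\ge$ direction is immediate from $(S \vee T)' \ge S', T'$, while the reverse requires commuting $\phi$ with a directed supremum — this is the classical formula for the adjoint of a join of positive operators and can be quoted from standard references such as Aliprantis-Burkinshaw. A minor consistency check is that the lattice operations in $K^r_{AM-un}(E,F)$ agree with those inherited from $L^r(E,F)$, which follows from the preceding corollary identifying $K^r_{AM-un}(E,F)$ as a Banach sublattice of $L^r(E,F)$.
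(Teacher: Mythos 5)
Your necessity argument via rank-one operators $f\otimes u$ is essentially the paper's own (including the observation that $\|T\|_{AM-un}=\|T\|$ for positive $T$), and that half is fine. The genuine gap is in the sufficiency direction, and it sits exactly at the step you flagged as the ``principal technical hurdle'': the identity $(S\vee T)'\phi=S'\phi\vee T'\phi$ is \emph{false} in general, and it fails in the direction you need. Applying both sides to $x\in E_+$ does \emph{not} produce the same supremum: the right-hand side is $\sup\{\phi(Sy)+\phi(Tz): y+z=x,\ y,z\ge 0\}=\sup\{\phi(Sy+Tz)\}$, whereas the left-hand side is $\phi(\sup\{Sy+Tz\})$, and for a positive functional and an upward directed set $D$ one only has $\phi(\sup D)\ge\sup\phi(D)$, with equality requiring order continuity of $\phi$. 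The classical statement in Aliprantis--Burkinshaw is the one-sided inequality $(S\vee T)'\ge S'\vee T'$ (equivalently $|T|'\ge |T'|$), which can be strict. Concretely, take $E=\mathbb{R}$ (an $AL$-space), $F=\mathbb{R}^2$ with the sup norm (an $AM$-space), $S=1\otimes(1,0)$, $T=1\otimes(0,1)$, $\phi=(1,1)\in F'_+$: then $(S\vee T)'\phi=\phi((1,1))=2$ while $S'\phi\vee T'\phi=\max\{1,1\}=1$, and rescaling $\phi$ into the unit ball preserves the strict gap. Since your chain of equalities needs $\|(S\vee T)'\phi\|\le\|S'\phi\vee T'\phi\|$, what your argument actually delivers is only $\|S\vee T\|\ge\sup_\phi\|S'\phi\vee T'\phi\|=\max\{\|S\|,\|T\|\}$, i.e.\ the trivial inequality, and the needed upper bound is not established.

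The repair is to work directly in $F$ rather than passing to adjoints, which is what the paper does: by the Riesz--Kantorovich formula $(S\vee T)(x)=\sup\{Sy+Tz: y+z=x,\ y,z\ge0\}$; the $AL$-property of $E$ gives $\|Sy+Tz\|\le\|S\|\,\|y\|+\|T\|\,\|z\|\le\max\{\|S\|,\|T\|\}\|x\|$ for each decomposition, and the $AM$-property of $F$ upgrades this bound from single elements to finite suprema $\vee_{k}(Sy_k+Tz_k)$, hence to the upward directed supremum $(S\vee T)(x)$. The point is that the $AM$-identity must be applied in $F$ to the directed family whose supremum is $(S\vee T)x$, not in $E'$ after an adjoint has been taken; the adjoint destroys exactly the information (the value of $\phi$ on a directed supremum) that the direct argument controls.
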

\begin{proof}
$\Leftarrow$ Assume that $E$ is $AL$-space and $F$ is $AM$-space. If $T,S\in K^+_{AM-un}(E,F)$ and $x\in E_+$, then 
$$(S\vee T)(x)=\sup\{Sx_1+Tx_2:0\leq x_1,x_2,x_1+x_2=x\}.$$
Since $E$ is $AL$-space, so $\Vert Sx_1+Tx_2\Vert\leq\Vert Sx_1\Vert+\Vert Tx_2\Vert\leq \Vert S\Vert\Vert x_1\Vert+\Vert T\Vert\Vert x_2\Vert\leq (\Vert S\Vert\vee\Vert T\Vert)(\Vert x_1\Vert+\Vert x_2\Vert)=(\Vert S\Vert\vee\Vert T\Vert)(\Vert x\Vert)$.

Suppose that $x_1^1,x_2^1...x_1^n,x_2^n\in [0,x]$ satisfying $x_1^k+x_2^k=x$ for $1\leq k\leq n$. Since $F$ is $AM$-space, so we have
$$\Vert\vee^n_{k=1}(Sx^k_1+Tx_2^k)\Vert= \vee^n_{k=1}\Vert Sx^k_1+Tx_2^k\Vert\leq \max\{\Vert S\Vert,\Vert T\Vert\}\Vert x\Vert,$$
hence $\Vert(S\vee T)(x)\Vert\leq \max\{\Vert S\Vert,\Vert T\Vert\}\Vert x\Vert$, therefore $\Vert(S\vee T)\Vert\leq \max\{\Vert S\Vert,\Vert T\Vert\}$, by $\Vert(S\vee T)\Vert\geq \max\{\Vert S\Vert,\Vert T\Vert\}$, we have $\Vert(S\vee T)\Vert=\max\{\Vert S\Vert,\Vert T\Vert\}$, therefore $K^r_{AM-un}(E,F)$ is $AM$-space.

$\Rightarrow$ We show that $E^{\prime}$ is $AM$-space. For some $f\in F_+$ satisfying $\Vert f\Vert=1$, let $x_1^{\prime},x_2^\prime\in E^\prime$, since $K^r_{AM-un}(E,F)$ is $AM$-space, hence
$$\Vert x_1^\prime\vee x_2^\prime\Vert=\Vert (x_1^\prime\vee x_2^\prime)\otimes f\Vert=\Vert x_1^\prime\otimes f\Vert\vee\Vert x_2^\prime\otimes f\Vert=\Vert x_1^\prime\Vert\vee\Vert x_2^\prime\Vert.$$
So $E^\prime$ is $AM$-space, moreover $E$ is $AL$-space by \cite[Theorem~4.23]{AB:06}.

Finally, we show that $F$ is $AM$-space. For some $x^\prime\in E^\prime_+$ such that $\Vert x^\prime\Vert=1$, let $f_1,f_2\in F_+$, since $K^r_{AM-un}(E,F)$ is $AM$-space, hence
$$\Vert f_1\vee f_2\Vert=\Vert x^\prime\otimes(f_1\vee f_2)\Vert=\Vert x^\prime\otimes f_1\Vert\vee \Vert x^\prime\otimes f_2\Vert=\Vert f_1\Vert\vee\Vert f_2\Vert.$$
So $F$ is $AM$-space.
\end{proof}
Using the same method, we have the dual theorem.
\begin{theorem}\label{}
Suppose that $E$ and $F$ be Banach lattices, then $K^r_{AM-un}(E,F)$ is $AL$-space under the $AM$-$un$-norm if and only if $E$ is $AM$-space and $F$ is $AL$-space.	
\end{theorem}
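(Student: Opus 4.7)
The plan is to mirror the proof of the preceding theorem, swapping the roles of $\vee$ and $+$ as well as of the $AM$ and $AL$ axioms. A preliminary observation that will be used throughout is that for any $T \in K^+_{AM-un}(E,F)$ one has $\|T\|_{AM-un} = \|T\|$: taking $S = T$ in the defining infimum gives $\|T\|_{AM-un} \leq \|T\|$, while the already-recorded inequality $\|T\|_{AM-un} \geq \|T\|_r = \|T\|$ supplies the converse. Consequently, the $AL$-identity for $K^r_{AM-un}(E,F)$ reduces to proving $\|S+T\| = \|S\| + \|T\|$ in the operator norm for all $S,T \in K^+_{AM-un}(E,F)$.

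For the $(\Leftarrow)$ direction, assume $E$ is $AM$ and $F$ is $AL$. Only the inequality $\|S+T\| \geq \|S\| + \|T\|$ requires work. I would pick $x_n, y_n \in E_+$ with $\|x_n\|, \|y_n\| \leq 1$ such that $\|Sx_n\| \to \|S\|$ and $\|Ty_n\| \to \|T\|$. Because $E$ is $AM$, $\|x_n \vee y_n\| = \max\{\|x_n\|,\|y_n\|\} \leq 1$. Monotonicity of the lattice norm on the positive cone together with the $AL$-property of $F$ then yield
\[
\|S+T\| \geq \bignorm{(S+T)(x_n \vee y_n)} = \bignorm{S(x_n \vee y_n)} + \bignorm{T(x_n \vee y_n)} \geq \|Sx_n\| + \|Ty_n\|,
\]
and letting $n \to \infty$ closes the gap.

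For the $(\Rightarrow)$ direction I would use the rank-one tensor operators exactly as in the preceding theorem. For $f \in E^\prime_+$ and $u \in F_+$, the operator $f \otimes u$ is compact, hence lies in $K^+_{AM-un}(E,F)$, with $\|f \otimes u\|_{AM-un} = \|f\|\|u\|$. To see that $F$ is $AL$, fix $f \in E^\prime_+$ with $\|f\| = 1$ and use the identity $f \otimes (u_1 + u_2) = (f \otimes u_1) + (f \otimes u_2)$ together with the assumed $AL$-axiom of $K^r_{AM-un}(E,F)$ to obtain $\|u_1 + u_2\| = \|u_1\| + \|u_2\|$ for $u_1,u_2 \in F_+$. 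To see that $E$ is $AM$, fix $u \in F_+$ with $\|u\| = 1$ and run the analogous argument for $f_1, f_2 \in E^\prime_+$ via $(f_1 + f_2) \otimes u = (f_1 \otimes u) + (f_2 \otimes u)$, obtaining $\|f_1 + f_2\| = \|f_1\| + \|f_2\|$; thus $E^\prime$ is $AL$ and hence $E$ is $AM$ by \cite[Theorem~4.23]{AB:06}.

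The main obstacle is the merging step $\|S+T\| \geq \|S\| + \|T\|$: two separate approximating sequences must be combined into one, and this is exactly what the $AM$-property of $E$ permits through $x_n \vee y_n$, while the $AL$-property of $F$ is what converts the sum of image norms back into a single norm. Both hypotheses on $E$ and $F$ are consumed exactly once, in symmetric fashion to the preceding theorem.
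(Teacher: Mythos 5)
Your proof is correct and follows essentially the same route as the paper's: the $\sup$-approximation via $x\vee y$ combined with the $AM$-property of $E$ and $AL$-property of $F$ for sufficiency, and rank-one operators $f\otimes u$ for necessity, concluding $E'$ is $AL$ hence $E$ is $AM$. Your explicit preliminary remark that $\Vert T\Vert_{AM-un}=\Vert T\Vert$ for positive $T$ is a small but worthwhile clarification that the paper leaves implicit.
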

\begin{proof}
$\Leftarrow$ Assume that $E$ is $AM$-space and $F$ is $AL$-space. If $T,S\in K^+_{AM-un}(E,F)$ and for any $\epsilon>0$, there exsits $x_1,x_2\in E_+$ such that $\Vert x_1\Vert,\Vert x_2\Vert\leq1,\Vert Sx_1\Vert\geq\Vert S\Vert-\epsilon$ and $\Vert Tx_2\Vert\geq\Vert T\Vert-\epsilon$. Since $E$ is $AM$-space, so $\Vert x_1\vee x_2\Vert=\max\{\Vert x_1\Vert,\Vert x_2\Vert\}\leq1$. And since $F$ is $AL$-space, so 
$$\Vert(S+T)(x_1\vee x_2)\Vert=\Vert S(x_1\vee x_2)\Vert+\Vert T(x_1\vee x_2)\Vert\geq \Vert S(x_1)\Vert+\Vert T(x_2)\Vert\geq \Vert S\Vert+\Vert T\Vert-2\epsilon,$$
so $\Vert S+T\Vert\geq \Vert S\Vert+\Vert T\Vert$, and by $\Vert S+T\Vert\leq \Vert S\Vert+\Vert T\Vert$, we have $\Vert S+T\Vert=\Vert S\Vert+\Vert T\Vert$, hence $K^r_{AM-un}(E,F)$ is $AL$-space.

$\Rightarrow$ We show that $F$ is $AL$-space. For some $x^\prime \in E^\prime_+$ and $\Vert x^\prime\Vert=1$, and any $f_1,f_2\in F_+$, since $K^r_{AM-un}(E,F)$ is $AL$-space, we have 
$$\Vert f_1+f_2\Vert=\Vert x^\prime\otimes(f_1+f_2)\Vert=\Vert x^\prime\otimes f_1\Vert+\Vert x^\prime\otimes f_2\Vert=\Vert f_1\Vert+\Vert f_2\Vert,$$
hence $F$ is $AL$-space.

In final, we show that $E$ is $AM$-space. For some $f\in F_+$ and $\Vert f\Vert=1$, and any $x_1^\prime,x_2^\prime\in F_+$, since $K^r_{AM-un}(E,F)$ is $AL$-space, we have 
$$\Vert x_1^\prime+x_2^\prime\Vert=\Vert (x_1^\prime+x_2^\prime)\otimes f\Vert=\Vert x_1^\prime\otimes f\Vert+\Vert x_2^\prime\otimes f\Vert=\Vert x_1^\prime\Vert+\Vert x_1^\prime\Vert,$$
hence $E^\prime$ is $AL$-space, so $E$ is $AM$-space by \cite[Theorem~4.23]{AB:06}.
\end{proof}

\end{document}